\documentclass{amsart}

\usepackage{url}

\usepackage{amsmath}
\usepackage{amsfonts}
\usepackage{amssymb}
\usepackage{xcolor}
\usepackage{graphicx}
\newcommand{\Second}{\textup{I}\!\textup{I}}

\newtheorem{theorem}{Theorem}

\newtheorem{definition}{Definition}
\newtheorem{lemma}{Lemma}
\newtheorem{proposition}{Proposition}
\newtheorem{remark}{Remark}

\begin{document}

\title[Covariance matrix, EIG and LLE]{Connecting dots -- from local covariance to empirical intrinsic geometry and locally linear embedding}

\author{John Malik}
\address{Department of Mathematics, Duke University, Durham, NC, USA}
\email{john.malik@duke.edu}

\author{Chao Shen}
\address{Department of Mathematics, Duke University, Durham, NC, USA}
\email{chao.shen@duke.edu}

\author{Hau-Tieng Wu}
\address{Department of Mathematics and Department of Statistical Science, Duke University, Durham, NC, USA; Mathematics Division, National Center for Theoretical Sciences, Taipei, Taiwan}
\email{hauwu@math.duke.edu}

\author{Nan Wu}
\address{Department of Mathematics, Duke University, Durham, NC, USA}
\email{nan.wu@duke.edu}

\begin{abstract}
Local covariance structure under the manifold setup has been widely applied in the machine learning society. Based on the established theoretical results, we provide an extensive study of two relevant manifold learning algorithms, empirical intrinsic geometry (EIG) and the locally linear embedding (LLE) under the manifold setup. Particularly, we show that without an accurate dimension estimation, the geodesic distance estimation by EIG might be corrupted. Furthermore, we show that by taking the local covariance matrix into account, we can more accurately estimate the local geodesic distance. When understanding LLE based on the local covariance structure, its intimate relationship with the curvature suggests a variation of LLE depending on the ``truncation scheme''. We provide a theoretical analysis of the variation.
\newline
\newline
\textbf{Keywords}: local covariance matrix, empirical intrinsic geometry, locally linear embedding, geodesic distance, {latent space model, Mahalanobis distance.}
\end{abstract}

\maketitle

\section{Introduction}\label{Sect:Introduction}

Covariance is arguably one of the most important quantities in data analysis. It has been widely studied in the past century and is still an active research topic nowadays. In this paper, we focus on the {\em local covariance structure} under the manifold setup, which has been widely applied, explicitly or implicitly, to various applications in different fields; see, for example, a far-from-complete list \cite{KambhatlaLeen1997,Roweis_Saul:2000,Donoho_Grimes:2003,Brand2003,Zhang_Zha:2004,Kushnir2006,Goldberg2009,Salhov2012,Gong2012,Singer_Wu:2012,Pedagadi2013,Little2017,Arias-Castro2017}. In the past few years, its mathematical and statistical properties have been well established \cite{Singer_Wu:2012,Cheng_Wu:2013,Bernstein2014, Kaslovsky2014, Tyagi2013,wu2017think} for different purposes. 
In this paper, based on the established theoretical foundation, we extensively discuss two topics in the manifold learning society {that are related to the local covariance structure} -- empirical intrinsic geometry (EIG) and locally linear embedding (LLE).

{EIG \cite{Talmon2012,Talmon2013}, or originally called non-linear independent component analysis \cite{Singer2008}, is a technique aiming to deal with the distortion underlying the collected dataset that is caused by the observation process. In many applications, the manifold structure we have interest cannot be directly accessed, but only via an observation. However, the observation process might nonlinearly deform the manifold we have interest. As a result, the information inferred from the observed data point cloud might not faithfully reflect the intrinsic properties.
The goal of EIG is correcting this deformation by taking the local covariance matrix into account. From the statistical viewpoint, it is a {nonlinear {\em latent space model}, and the local covariance structure leads to a} generalization of the Mahalanobis distance. While it has been successfully applied to different problems \cite{Wu_Talmon_Lo:2015,Mishne2015,YairTalmon2017,Shemesh2017,2018arXiv180301710L}, to the best of our knowledge, {except an argument on the Euclidean space setup \cite{Singer2008},} a systematic evaluation of how the algorithm works under the manifold setup, and its sensitivity to the parameter choice, is missing. Due to its importance, the first contribution of this paper is providing a quantification of EIG under the manifold setup, and discuss how the chosen parameter influences the final result. In the special case that there is no deformation (that is, we can access the manifold directly), we show a more accurate geodesic distance estimator, {called the {\em covariance-corrected geodesic distance estimator},} by correcting the Euclidean distance when the manifold is embedded in the Euclidean space.

LLE \cite{Roweis_Saul:2000} is a widely applied nonlinear dimension reduction technique in the manifold learning society. Despite its wide application, its theoretical properties were studied only until recently. See \cite{wu2017think} as an example. Based on the analysis, several peculiar behaviors of LLE have been better understood. While LLE depends on the barycentric coordinate to determine the affinity between pairs of points, it has a natural relationship with the local covariance matrix; the kernel associated with LLE is not symmetric, which is different from the kernel commonly used in graph Laplacian-based algorithms like Laplacian Eigenmaps \cite{belkin2003} or Diffusion Maps \cite{Coifman2006}. 
The regularization plays an essential role in the algorithm. Different regularizations lead to different embedding results. Based on the intimate relationship between the curvature and regularization, the second contribution of this paper is studying a variation of LLE by directly truncating the local covariance matrix.

{The paper is organized in the following way. In Section \ref{Section LCM}, we introduce the notation for the local covariance structure analysis and some relevant known results.}
In Section \ref{Sect:Other}, we provide a theoretical argument of EIG under the manifold setup; when the observation process is trivial, we {analyze the} covariance-corrected geodesic distance estimator. In Section \ref{Sect:Other2}, we discuss the relationship of EIG and LLE, and provide a variation of LLE. Numerical results are shown in each section to support the theoretical findings. 
{In Section \ref{Section Numerics}, we provide some numerical results.}
In Section \ref{Sect:Conclusion}, discussion and conclusion are provided.

\subsection{Notations and Mathematical Setup}\label{Sect:Prelims}

Let $X$ be a $p$-dimensional random vector with the range supported on a $d$-dimensional, compact, smooth Riemannian manifold $(M, g)$ isometrically embedded in $\mathbb{R}^p$ via $\iota:M\hookrightarrow \mathbb{R}^p$.  We assume that $M$ is boundary-free in this work. Let $\exp_{x}:T_xM\to M$ be the exponential map at $x$. Unless otherwise specified, we will carry out calculations using normal coordinates. 
Let $T_xM$ denote the tangent space at $x\in M$, and let $\iota_*T_{x}M$ denote the embedded tangent space in $\mathbb{R}^p$.  Write the normal space at $y=\iota(x)$ as $(\iota_*T_{x}M)^\bot$. 
Let $\Second_{x}$ be the second fundamental form of $\iota$ at $x$.  Let $P$ be the probability density function (p.d.f.) associated with the random vector $X$ \cite[Section 4]{Cheng_Wu:2013}. We assume that $P \in \mathcal{C}^5(\iota(M))$ and that there exists $0 < P_m \leq P_M$ such that $P_m \leq P(y) \leq P_M < \infty$ for all $y\in \iota(M)$. Let $\mathbf{e}_i\in\mathbb{R}^p$ be the unit $p$-dimensional unitary vector with an $1$ in the $i$th entry. Let $B_h^{\Bbb R^p}(z)$ denote the $p$-dimensional Euclidean ball of radius $h > 0$ with centre $z \in \Bbb R^p$, and let $\chi_{A} \colon \Bbb R^p \rightarrow \{0, 1\}$ denote the indicator function of the set $A \subset \Bbb R^p$.
{Denote $\mathbb{O}(p)$ to be the orthogonal group in dimension $p\in \mathbb{N}$.}
We define two main quantities, the {\em truncated inverse}  and {\em regularized inverse}, of a symmetric matrix that are related to EIG and LLE respectively.

\begin{definition} \label{trunC}
Let $A \in \mathbb{R}^{p \times p}$ be a real symmetric matrix. $r=\textup{rank}(A)$. 
Let $\lambda_{1} \geq \ldots  \geq \lambda_{p}$ be the eigenvalues of $A$, and let  $u_{1}, \ldots,  u_{p}$ be the corresponding normalized eigenvectors. For $0<\alpha \leq r$, the $\alpha$-truncated inverse of $A$ is defined as 
\begin{align}
\mathcal{T}_\alpha[A] =&\, \begin{bmatrix}
u_{1} & \cdots & u_{\alpha}
\end{bmatrix} \begin{bmatrix} \lambda_{1}^{-1} & & \\
 & \ddots & \\
 & & \lambda_{\alpha}^{-1} 
 \end{bmatrix} 
 \begin{bmatrix}
u_{1}^\top\\
\vdots\\
u_{\alpha}^\top
\end{bmatrix}.
\end{align}
Choose a regularization constant $c> 0$. The $c$-regularized inverse of $A$ is defined as
\begin{equation}
\mathcal{I}_c[A] = \begin{bmatrix}
u_{1} & \cdots & u_{r}
\end{bmatrix} \begin{bmatrix} (\lambda_{1} + c )^{-1} & & \\
 & \ddots & \\
 & & (\lambda_{r} + c )^{-1} 
 \end{bmatrix} 
 \begin{bmatrix}
u_{1}^\top\\
\vdots\\
u_{r}^\top
\end{bmatrix},
\end{equation}
\end{definition}
Note that if {$\alpha=r$}, then $\mathcal{T}_\alpha[A]$ is the Penrose-Moore pseudo-inverse. When $c\to 0$, $\mathcal{I}_c[A]$ becomes $\mathcal{T}_r[A]$.

\section{Local Covariance Matrix and Some Facts}\label{Section LCM}
{We start with the definition of the local covariance matrix.}
\begin{definition}
For $x \in M$ and a measurable set $\mathcal{O}\subset \iota(M)$, the {\em local covariance matrix at $\iota(x)\in \iota(M)$ associated with $\mathcal{O}$} is defined as 
\begin{equation} \label{general def of covariancematrix}
 C_{x,\mathcal{O}}:=\mathbb{E}[(X-\iota(x))(X-\iota(x))^{\top}\chi_{\mathcal{O}}(X)]\in\mathbb{R}^{p\times p}.
\end{equation}
When $\mathcal{O}$ is $B_{h}^{\mathbb{R}^p}(\iota(x))\cap \iota(M)$, where $h>0$, we denote 
\[
C_h(x):= C_{x,B_{h}^{\mathbb{R}^p}(\iota(x))\cap  \iota(M)}
\]
and simply call $ C_h(x)$ the {\em local covariance matrix at $\iota(x)$}. 
\end{definition}
The local covariance matrix and its relationship with the embedded tangent space of the manifold have been widely studied recently, including (but not exclusively) \cite{Singer_Wu:2012,Cheng_Wu:2013,Tyagi2013,Bernstein2014, Kaslovsky2014,Little2017}. Recently, in order to systematically study the LLE algorithm, the higher order structure of the local covariance matrix was explored in \cite{wu2017think}. We now summarize the result for our purpose. 
Since the local covariance matrix is invariant up to translation and rotation and the analysis is local at one point, to simplify the discussion, from now on, when we analyze the local covariance matrix at $x$, we always assume that the manifold is translated and rotated in $\mathbb R^p$ so that $\iota_*T_xM$ is spanned by $\mathbf{e}_1,\ldots,\mathbf{e}_d \in \Bbb R^p$.

\begin{lemma}(\cite[Proposition~3.2]{wu2017think}) \label{prop1}
Fix $x\in M$ and take $h>0$. Write the eigen-decomposition of $ C_{h}(x)=U_{h}(x)\Lambda_{h}(x) U_{h}(x)^{\top}$.
Then, when $h$ is sufficiently small, the diagonal matrix $\Lambda_{h}(x)\in\mathbb{R}^{p\times p}$ satisfies
\begin{align}\label{Equation:Cov_EVal}
\Lambda_{h}(x) &=\frac{|S^{d-1}|{P}(x)h^{d+2}} {d(d+2)}  \begin{bmatrix}
I_{d\times d} +O(h^2)  & 0 \\
0 & O(h^2) \\
\end{bmatrix},
\end{align}
and the orthogonal matrix $U_{h}(x) \in {\mathbb{O}(p)}$ satisfies
\begin{align}
U_{h}(x)&=\begin{bmatrix} U_1 & 0 \\ 0 & U_2\end{bmatrix}(I_{p\times p}+h^2 \mathsf{S})+O(h^4),\label{Expansion:U_x}
\end{align}
where $U_1\in {\mathbb{O}(d)}$, $U_2\in  {\mathbb{O}(p-d)}$, and $\mathsf{S}$ is an anti-symmetric matrix. 
\end{lemma}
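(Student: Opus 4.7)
The plan is to expand $C_h(x)$ directly in normal coordinates at $x$, extract its block structure with respect to the splitting $\mathbb{R}^p = \iota_*T_xM \oplus (\iota_*T_xM)^\perp$, and read off the spectrum via block perturbation theory. Since $\mathbb{R}^p$ has been rotated so that $\iota_*T_xM = \mathrm{span}\{\mathbf{e}_1,\ldots,\mathbf{e}_d\}$, I would use, for $v \in T_xM$ with $|v|$ small, the embedding expansion
\[
\iota(\exp_x v) - \iota(x) = (v, 0) + \tfrac{1}{2}(0, \Second_x(v,v)) + O(|v|^3),
\]
the volume density $dV_g = (1 + O(|v|^2))\,dv$, and a Taylor expansion of $P\circ \exp_x$ to fourth order (using the $\mathcal{C}^5$ hypothesis). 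The preimage $\exp_x^{-1}(B_h^{\mathbb{R}^p}(\iota(x)) \cap \iota(M))$ agrees with the flat disk $\{|v|<h\} \subset T_xM$ up to a shell of thickness $O(h^3)$, coming from $\|\iota(\exp_x v) - \iota(x)\|^2 = |v|^2 + O(|v|^4)$, and this shell contributes only at order $h^{d+4}$.

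Splitting the outer product along the tangent/normal decomposition yields three blocks. The tangent-tangent block is $\int_{|v|<h} vv^\top P(x)\,dv + O(h^{d+4}) = \frac{|S^{d-1}|P(x)h^{d+2}}{d(d+2)} I_{d\times d} + O(h^{d+4})$, because odd-degree polynomial moments over symmetric disks vanish. The tangent-normal block has leading piece $\int v\otimes \Second_x(v,v)\,P(x)\,dv$, which is odd in $v$, so the whole block is $O(h^{d+4})$. The normal-normal block is dominated by $\tfrac14 \Second_x(v,v)\Second_x(v,v)^\top$, of order $|v|^4$, giving a block of size $O(h^{d+4})$. Thus
\[
C_h(x) = h^{d+2}\bigl(D_0 + h^2 E + O(h^4)\bigr),\qquad D_0 = \tfrac{|S^{d-1}|P(x)}{d(d+2)}\,\mathrm{diag}(I_{d\times d},\, 0_{(p-d)\times(p-d)}),
\]
for an explicit symmetric matrix $E$ whose three blocks have been identified above.

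Because $D_0$ has a spectral gap of order one between its positive and zero blocks, I would apply standard block perturbation theory. The block-diagonal part of $E$ is absorbed into orthogonal factors $U_1 \in \mathbb{O}(d)$ and $U_2 \in \mathbb{O}(p-d)$ obtained by diagonalizing its in-block pieces, while the off-diagonal cross-block part generates an antisymmetric mixing $\mathsf{S}$ determined by the Sylvester-type equation $[D_0,\mathsf{S}]_{TN} = -E_{TN}$, whose solution is $O(1)$ precisely because the gap is $\Theta(1)$. Assembling, one obtains $U_h(x) = \mathrm{diag}(U_1,U_2)(I_{p\times p} + h^2\mathsf{S}) + O(h^4)$ orthogonal (antisymmetry of $\mathsf{S}$ is exactly what preserves orthogonality of $U_h(x)$ to this order), and the diagonal of $U_h(x)^\top C_h(x) U_h(x)$ is the claimed $\Lambda_h(x)$.

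The main obstacle I anticipate is the careful bookkeeping of the shell-domain mismatch and of the residual corrections in $E$: one must confirm that all $O(|v|^3)$ Taylor and domain errors either vanish by parity or contribute only to the $O(h^{d+4})$ remainders, and that the $\mathcal{C}^5$ regularity of $P$ provides just enough smoothness to push the expansion cleanly to $O(h^4)$ in the eigenvectors and $O(h^{d+6})$ in the eigenvalues. The rest of the argument is standard linear algebra once the expansion of $C_h(x)$ is in hand.
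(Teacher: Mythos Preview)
The paper does not prove this lemma itself but quotes it from \cite[Proposition~3.2]{wu2017think}; your outline---expand $C_h(x)$ in normal coordinates via the embedding Taylor expansion (Lemma~\ref{Lemma:1}), use parity over the ball to kill odd moments and the $O(h^3)$ shell, obtain the block form $h^{d+2}(D_0+h^2E+O(h^4))$, then apply block perturbation across the order-one spectral gap of $D_0$---is correct and is exactly the argument used in that source. Indeed, the present paper invokes that same ``perturbation argument (see Appendix A of \cite{wu2017think})'' when it needs the analogous eigenstructure of $\bar{\mathcal C}_\epsilon(y)$ in the proof of Theorem~\ref{Theorem2}.
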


First, note that the local covariance matrix depends on the p.d.f.. Particularly, when the sampling is non-uniform, the eigenvalues are deviated by the p.d.f..
Equation (\ref{Equation:Cov_EVal}) in this lemma says that the first $d$ eigenvalues of $ C_{h}(x)$ are of order $h^{d+2}$, while the rest of the eigenvalues are two orders higher; that is, of order $O(h^{d+4})$. Moreover, note that $I_{p\times p}+h^2 \mathsf{S}$ approximates a rotation, so equation (\ref{Expansion:U_x}) says that the first $d$ normalized eigenvectors of $ C_{h}(x)$ deviate from an orthonormal basis of $\iota_*T_xM$ by a rotational error of order $O(h^2)$, and the rest of the orthonormal eigenvectors of $ C_{h}(x)$ deviate from an orthonormal basis of $(\iota_*T_xM)^\bot$ within a rotational error of order $O(h^2)$. 

In practice, we are given a finite sampling of points from the embedded manifold and need to approximate the local covariance matrix. 
Since the finite convergence argument of the sample local covariance matrix to the local covariance matrix is standard (see, for example, \cite[Propositions 3.1 and 3.2 and Lemma~E.4]{wu2017think}), to focus on the main idea and simplify the discussion, below we work directly on the continuous setup; that is, we consider only the asymptotical case when $n\to \infty$.

It has been well known that when a manifold $M$ is isometrically embedded into another manifold $M'$, then for two close points $x,y\in M$, the geodesic distance between $x,y$ in $M$ could be well approximate by the geodesic distance between $x,y$ in $M'$, with the error depending on the second fundamental form of the embedding (see, for example \cite[Proposition 6]{Smolyanov:2007}). 
We have the following lemma when $M'$ is Euclidean space.
\begin{lemma}(\cite[Lemma B.2]{wu2017think}) \label{Lemma:1}
Suppose that $M$ is isometrically embedded in $ \mathbb{R}^p$ through $\iota$. Fix $x\in M$ and use polar coordinates $(t,\theta)\in [0,\infty)\times S^{d-1}$ to parametrize $T_{x}M$. For $y=\exp_x(\theta t)$ for sufficiently small $t$, we have:
\begin{align}\label{oldLemma1}
\iota(y) -\iota(x)
=&\,(\iota_{*}\theta) t+\frac{\Second_x(\theta,\theta) }{2}t^2+\frac{\nabla_{\theta}\Second_x(\theta,\theta)}{6}t^3+O(t^4).
\end{align}
Moreover, when $h:=\|\iota(y)-\iota(x)\|_{\mathbb{R}^p}$ is sufficiently small, we have
\begin{align}
h &=  t-\frac{\|\Second_x(\theta,\theta)\|^2}{24} t^3 - \frac{\nabla_{\theta}\Second_x(\theta,\theta)  \cdot  \Second_x(\theta,\theta)}{24}t^4 +O(t^5)\,.\label{oldLemma2a} 
\end{align}
and hence
\begin{align}
t = h + \frac{\|\Second_x(\theta,\theta)\|^2}{24}h^{3} +\frac{\nabla_{\theta}\Second_x(\theta,\theta)  \cdot  \Second_x(\theta,\theta)}{24}h^4 +O(h^5). \label{oldLemma2b}
\end{align}
\end{lemma}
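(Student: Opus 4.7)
The plan is to view $\iota\circ\gamma$ as a curve in $\mathbb{R}^p$, where $\gamma(t)=\exp_x(\theta t)$ is the unit-speed geodesic leaving $x$ in the direction $\theta$, and to Taylor expand $\iota(\gamma(t))$ about $t=0$. The coefficients of the expansion are the successive derivatives of $\iota\circ\gamma$ at $0$, so the substantive task is to express them intrinsically, using the geodesic equation $\nabla_{\gamma'}\gamma'=0$ together with the fact that $\iota$ is an isometric embedding.

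First I would identify the low-order derivatives. By the chain rule, $(\iota\circ\gamma)'(0)=\iota_*\theta$. The Gauss formula decomposes the ambient acceleration of any curve on $M$ into its intrinsic covariant acceleration plus the second fundamental form of its velocity; because $\gamma$ is geodesic, only $\Second_x(\theta,\theta)$ survives. Differentiating the identity $(\iota\circ\gamma)''(t)=\Second_{\gamma(t)}(\gamma'(t),\gamma'(t))$ once more in $t$, and again invoking $\nabla_{\gamma'}\gamma'=0$, produces the cubic coefficient $\nabla_\theta\Second_x(\theta,\theta)$ and hence (\ref{oldLemma1}).

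Next I would square this expansion term by term to obtain $h^2=\|\iota(y)-\iota(x)\|^2$. The $t^2$ coefficient is $\|\iota_*\theta\|^2=1$ since $\iota$ is isometric, and the $t^3$ coefficient vanishes because $\Second_x(\theta,\theta)$ lies in the normal space and is therefore orthogonal to $\iota_*\theta$. The key tool for the remaining orders is to repeatedly differentiate the unit-speed identity $\|(\iota\circ\gamma)'(t)\|^2=1$ in $t$ and evaluate at $0$: the first differentiation recovers the normality of $\Second_x(\theta,\theta)$; the second yields $\nabla_\theta\Second_x(\theta,\theta)\cdot\iota_*\theta=-\|\Second_x(\theta,\theta)\|^2$; and the third pins down the component of $(\iota\circ\gamma)^{(4)}(0)$ along $\iota_*\theta$ in terms of $\nabla_\theta\Second_x(\theta,\theta)\cdot\Second_x(\theta,\theta)$. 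Assembling these contributions gives
\begin{equation*}
h^2 = t^2 - \frac{\|\Second_x(\theta,\theta)\|^2}{12}\,t^4 - \frac{\nabla_\theta\Second_x(\theta,\theta)\cdot\Second_x(\theta,\theta)}{12}\,t^5 + O(t^6),
\end{equation*}
and expanding $h=t\sqrt{1+u}=t(1+u/2+O(u^2))$ delivers (\ref{oldLemma2a}). For (\ref{oldLemma2b}), I would invert by ansatz: write $t=h+ah^3+bh^4+O(h^5)$, substitute into (\ref{oldLemma2a}), and match coefficients order by order in $h$.

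The main obstacle is fourth-order bookkeeping. To obtain the $t^4$ term of $h$ (equivalently, the $t^5$ term of $h^2$), one must retain the fourth ambient derivative of $\iota\circ\gamma$ at $0$ even though it is absorbed into the remainder of (\ref{oldLemma1}), because its component along $\iota_*\theta$ feeds into the expansion of $h^2$. The clean way to extract that component without ever computing the fourth derivative in closed form is through the three successive differentiations of $\|(\iota\circ\gamma)'(t)\|^2=1$ described above, which package the required projections onto $\iota_*\theta$ purely in terms of $\Second_x$ and $\nabla_\theta \Second_x$.
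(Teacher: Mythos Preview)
Your proposal is correct and follows the standard route: Taylor expand $\iota\circ\gamma$ along the geodesic, identify the coefficients via the Gauss formula and the geodesic equation, then extract the expansion of $h$ by squaring and repeatedly differentiating the unit-speed identity $\|(\iota\circ\gamma)'\|^2=1$. The paper does not supply its own proof of this lemma; it simply cites \cite[Lemma~B.2]{wu2017think} and \cite[Proposition~6]{Smolyanov:2007}, and in the proof of the generalization (Lemma~\ref{Lemma1ofTheorem2}) it says only that the analogous expansion ``follows from a direct Taylor expansion,'' which is precisely what you do.

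One small remark for precision: the third ambient derivative $(\iota\circ\gamma)'''(0)$ has a tangential piece $-\iota_*A_{\Second_x(\theta,\theta)}(\theta)$ in addition to the normal covariant derivative of $\Second$, so the symbol $\nabla_\theta\Second_x(\theta,\theta)$ in (\ref{oldLemma1}) should be read as the full third derivative (this is the paper's convention as well). This does not affect your computation of $h$: the quantity $\nabla_\theta\Second_x(\theta,\theta)\cdot\Second_x(\theta,\theta)$ that appears in (\ref{oldLemma2a}) sees only the normal component anyway, and the tangential component is already accounted for through your relation $a\cdot c_3=-|b|^2$.
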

The proof of the Lemma could be found in, for example, in \cite[Proposition 6]{Smolyanov:2007} when $M'$ is a generic manifold or \cite[Lemma B.2]{wu2017think} when $M'$ is Euclidean space.

\section{Empirical Intrinsic Geometry and Local Covariance Matrix}\label{Sect:Other}

EIG \cite{Talmon2012,Singer2008} is a technique aiming to deal with the underlying distortion caused by the observation process. The basic idea of the technique is that the local covariance matrix captures the distortion, under suitable assumptions, and we can correct the distortion by manipulating the local covariance matrix and recover the local geodesic distance. From the statistical viewpoint, it is a generalization of the Mahalanobis distance. We now examine the {intimate} relationship between EIG and the covariance-corrected geodesic estimator.

Suppose that $M$ is a $d$ dimensional closed Riemannian manifold that hosts the information we have interest but we cannot directly access. {We assume that there is a method to {\em indirectly} access $M$ via an {\em observation}, and hence collect a dataset that contains {\em indirect} information of $M$ that we have interest.}
We model the observation as a {nonlinear function} $\mathsf{\Phi}: M \to N$, where $\mathsf{\Phi}$ is a diffeomorphism {of $M$} and $N$ is isometrically embedded in $\mathbb{R}^q$ via $\iota$. Under this setup, for the point $x\in M$ that we cannot access, there is a corresponding point $\iota(y)\in\iota(N)\subset \mathbb{R}^q$ that we can access and collect as the dataset, where $y=\mathsf{\Phi}(x)$. The mission is estimating the geodesic distance between two close points $x\in M$ and $w\in M$ through accessible data points $\iota(y)=\iota(\mathsf{\Phi}(x))$ and $\iota(z)=\iota(\mathsf{\Phi}(w))$. This situation is commonly encountered in data analysis; for example, the brain activity information contained in electroencephalogram {recorded from the scalp might be deformed due to the process of recording the electroencephalogram, the anatomical structure and physiological properties.
Clearly, due to the diffeomorphism associated with the observation, the pairwise distance estimated from the collected database no longer faithfully reflects the pairwise distance of the inaccessible space. Hence, analysis tools depending on the pairwise distance are biased}. The interest of EIG is correcting this bias. 

The main idea of EIG can be summarized in the following way. For each hidden point $x\in M$, take the geodesic ball $B_{\epsilon}(x)\subset M$ with the radius $\epsilon$ and centered at $x$.
As is discussed in \cite{Singer2008}, 
{\em if we can determine} $E(y):=\mathsf{\Phi}(B_{\epsilon}(x))\subset N$ around $y$ that is associated with $B_{\epsilon}(x)$, then up to a constant, the geodesic distance between two close points $w$ and $x$ can be well approximated by $\iota(y)$, $\iota(z)$, and the local covariance matrix $C_{y,\iota(E(y))}$ associated with $\iota(E(y))$ as defined in equation (\ref{general def of covariancematrix}). To simplify the notation, denote $\mathcal{C}_\epsilon(y):= C_{y,\iota(E(y))}$.  See Figure \ref{Fig:illustration} for an illustration of the setup. 
Note that when $M$ is Euclidean and $\mathsf{\Phi}$ is linear, $E(y)$ is an ellipsoid. In the following, although we consider the manifold model that is in general not Euclidean, we abuse the terminology and call $E(y)$ an ellipsoid. 
Numerically, $\mathcal C_{\epsilon}(y)$ is estimated by {taking points in ${E}(y)$ into account. }Again, since the convergence proof is standard, to simplify the discussion, we skip the finite convergence step and focus on the continuous setup.

\begin{figure} 
\centering
\includegraphics[width=0.95\textwidth]{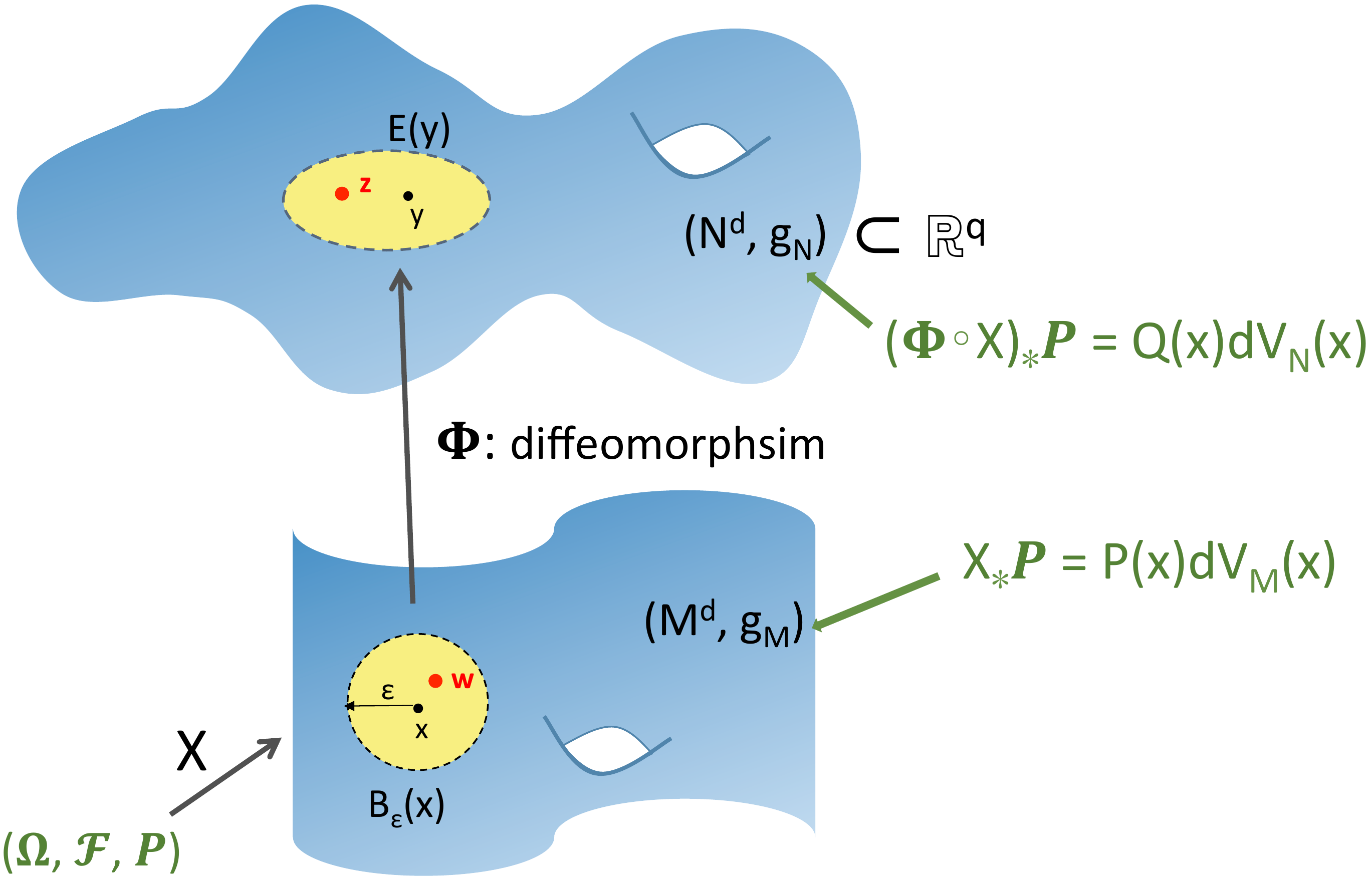}
\caption{The illustration of the theoretical framework of EIG.}\label{Fig:illustration}
\end{figure}

This main idea is carried out by noting that the local covariance matrix at $y$ associated with $\mathcal{C}_{\epsilon}(y)$ captures the Jacobian of the diffeomorphism associated with the observation, that is, $\mathcal C_{\epsilon}(y)\approx \nabla \mathsf{\Phi}|_{x}\mathsf{\Phi}|_{x}^\top$ \cite{Singer2008}.
Under the assumption that the ellipsoid $E(y)$ is known, $\mathcal C_{\epsilon}(y)\approx \nabla \mathsf{\Phi}|_{x}\mathsf{\Phi}|_{x}^\top$ is true and the dimension of the manifold $d$ is known, authors in \cite{Singer2008,Talmon2012} consider the following quantity, called the {\em EIG distance}, to estimate the geodesic distance between $x$ and $w$:
\begin{gather}\label{EIG}
 (\iota(z) - \iota(y))^\top  \left(\frac{\mathcal{T}_\alpha[\mathcal C_{\epsilon}(y)]+\mathcal{T}_\alpha[\mathcal C_{\epsilon}(z)]}{2}\right) (\iota(z) - \iota(y))\,,
\end{gather}
where $\alpha$ is chosen to be the dimension of $d$. 

\subsection{Challenges of EIG and analysis}

However, there are two challenges of applying the EIG idea: how to determine the ellipsoid ${E}(y)$ associated with $B_\epsilon(x)$, and how to estimate the dimension $d$ of the intrinsic manifold. In \cite{Singer2008} and most of its citations, the state space model combined with the stochastic differential equation (SDE) is considered, and the ellipsoid can be determined simply by taking the temporal relationship into account under this model. 
We refer readers with interest to \cite{Singer2008,Talmon2012} for more details about this state-space model.
If the state-space model and SDE cannot be directly applied, this task is most of time a big challenge and to the best of our knowledge not too much is known. This challenge is however out of the scope of this paper. 
On the other hand, most of time we do not have an access to the dimension of $d$, and a dimension estimation is needed. Although theoretically we can count on the spectral gap to estimate $d$, in practice it depends on the try-and-error process and little can be guaranteed. Particularly, when the manifold is highly distorted by the observation, faithfully estimating the dimension is another challenging task. 
To the best of our knowledge, although EIG has obtained several successes in different applications \cite{Wu_Talmon_Lo:2015,Mishne2015,YairTalmon2017,Shemesh2017}, a systematic exploration of the approximation under the general manifold setup is lacking. It is also not clear what may happen when the dimension is not estimated correctly.

Below, we assume that we have had the knowledge of the ellipsoid associated with $B_\epsilon(x)$ for all $x$, and hence we can evaluate the local covariance matrix at $y=\mathsf{\Phi}(x)$ associated with ${E}(y)$. We show how $\mathcal C_{\epsilon}(y)\approx \nabla \mathsf{\Phi}|_{x}\mathsf{\Phi}|_{x}^\top$ holds under the manifold setup, and the influenced of an erroneously estimated dimension. We start with the following definitions.
\begin{definition}
For $y\in N$, define the {\em normalized local covariance matrix} at $y$ associated with $E(y)\subset N$ as
\begin{align}
\bar{\mathcal C}_{\epsilon}(y):=\frac{\mathcal C_\epsilon(y)}{\epsilon^2\mathbb{E}[\chi_{E(y)}(Y)]},\label{Definition:NormalizedCovariance}
\end{align}
{where $Y:= \iota \circ \mathsf{\Phi} \circ X$ is the induced random variable.}
\end{definition}

The normalization step in (\ref{Definition:NormalizedCovariance}) is introduced to remove the impact of the nonuniform sampling. As we will see, without this normalization, when the sampling is non-uniform, the p.d.f. will play a role in the final analysis.

\begin{lemma}\label{Lemma2ofTheorem2}
Let $x \in M$ and $B_{\epsilon}(x) \subset M$ be the geodesic ball around $x$. The local covariance matrix at $y=\mathsf{\Phi}(x)$ associated with the ellipsoid $E(y)=\mathsf{\Phi}(B_{\epsilon}(x))\subset N$ satisfies
\begin{align}
\mathcal C_\epsilon(y) = \frac{|S^{d-1}|  P(x) \epsilon^{d+2}}{d(d+2)}[\iota_*|_y\nabla \mathsf{\Phi}(x)][ \nabla\mathsf{\Phi}(x)^\top \iota_*|_y^\top] +O(\epsilon^{d+4})\label{Expansion:Ceps:Lemma2}
\end{align}
and the normalized local covariance matrix at $y$ associated with $E(y)$ satisfies
\begin{align}
\bar{\mathcal  C}_{\epsilon}(y)=\frac{1}{d+2}[\iota_*|_y\nabla \mathsf{\Phi}(x)] [\nabla\mathsf{\Phi}(x)^\top \iota_*|_y^\top] +O(\epsilon^2)\label{Expansion:barCeps:Lemma2}\,.
\end{align}
If $v_1, v_2 \in (\iota_*T_{y}N)^\bot$, then 
\begin{align}
v_1^\top \bar{\mathcal  C}_{\epsilon}(y) v_2=\,& \frac{d\epsilon^2}{4|S^{d-1}|(d+4)}\int_{S^{d-1}}v_1^\top \Second_y(\nabla_{\theta} \mathsf{\Phi}(x), \nabla_{\theta} \mathsf{\Phi}(x)) (\Second_y(\nabla_{\theta} \mathsf{\Phi}(x), \nabla_{\theta} \mathsf{\Phi}(x)))^\top v_2 d\theta+O(\epsilon^{4})\,.\nonumber
\end{align}
\end{lemma}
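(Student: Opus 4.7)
The plan is to push every integral down to the tangent space $T_xM$ via normal coordinates. Parametrize $B_\epsilon(x)$ by $x'=\exp_x(t\theta)$ with $(t,\theta)\in[0,\epsilon]\times S^{d-1}$, so that the Riemannian volume form becomes $dV_M=(1+O(t^2))t^{d-1}\,dt\,d\theta$ and the density expands as $P(\exp_x(t\theta))=P(x)+O(t)$. The central computation is the Taylor expansion at $t=0$ of the smooth composite $\psi:=\iota\circ\mathsf{\Phi}\circ\exp_x\colon T_xM\to\mathbb{R}^q$,
\begin{equation*}
\psi(t\theta)-\psi(0)=t\,a_1(\theta)+\tfrac{t^2}{2}a_2(\theta)+\tfrac{t^3}{6}a_3(\theta)+O(t^4).
\end{equation*}
Since $D(\exp_x)|_0=\mathrm{Id}$, the chain rule identifies $a_1(\theta)=\iota_*|_y\nabla_\theta\mathsf{\Phi}(x)\in\iota_*T_yN$, and applying the Gauss formula on $\iota(N)\subset\mathbb{R}^q$ to the curve $\eta(t):=\mathsf{\Phi}(\exp_x(t\theta))$ decomposes $a_2(\theta)=\iota_*\bigl(\nabla^N_{\dot\eta}\dot\eta\bigr)\bigr|_{0}+\Second_y\bigl(\nabla_\theta\mathsf{\Phi}(x),\nabla_\theta\mathsf{\Phi}(x)\bigr)$, with the first summand tangential to $\iota(N)$ and the second normal.

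For (\ref{Expansion:Ceps:Lemma2}), I would retain only $a_1$ in the outer product $(\psi(t\theta)-\psi(0))(\psi(t\theta)-\psi(0))^\top$, yielding
\begin{equation*}
\mathcal C_\epsilon(y)=P(x)\,[\iota_*|_y\nabla\mathsf{\Phi}(x)]\left(\int_0^\epsilon\!\!\int_{S^{d-1}}\!\!t^{d+1}\theta\theta^\top\,d\theta\,dt\right)[\nabla\mathsf{\Phi}(x)^\top\iota_*|_y^\top]+O(\epsilon^{d+4}),
\end{equation*}
where the inner integral equals $\tfrac{|S^{d-1}|\epsilon^{d+2}}{d(d+2)}I_{d\times d}$ by the standard identity $\int_{S^{d-1}}\theta_i\theta_j\,d\theta=\tfrac{|S^{d-1}|}{d}\delta_{ij}$. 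The dropped $t\cdot t^2$ cross term integrates a cubic-in-$\theta$ function, which vanishes by the $\theta\mapsto-\theta$ symmetry of $d\theta$; the next crossed and squared contributions are all $O(\epsilon^{d+4})$. The scalar normalization $\mathbb E[\chi_{E(y)}(Y)]=\int_{B_\epsilon(x)}P\,dV_M=P(x)\tfrac{|S^{d-1}|\epsilon^d}{d}+O(\epsilon^{d+2})$ follows from the same volume-and-density expansion, and dividing (\ref{Expansion:Ceps:Lemma2}) by $\epsilon^2\mathbb E[\chi_{E(y)}(Y)]$ yields (\ref{Expansion:barCeps:Lemma2}).

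For the third identity, contracting with $v_1,v_2\in(\iota_*T_yN)^\perp$ annihilates $a_1$ and the tangential piece $\iota_*(\nabla^N_{\dot\eta}\dot\eta)$ of $a_2$, so the first surviving contribution to $v_i^\top(\psi(t\theta)-\psi(0))$ is $\tfrac{t^2}{2}v_i^\top\Second_y(\nabla_\theta\mathsf{\Phi}(x),\nabla_\theta\mathsf{\Phi}(x))$. Hence the leading term of $v_1^\top\mathcal C_\epsilon(y)v_2$ is
\begin{equation*}
\tfrac{P(x)}{4}\int_0^\epsilon\!\!\int_{S^{d-1}}t^{d+3}\,v_1^\top\Second_y(\nabla_\theta\mathsf{\Phi},\nabla_\theta\mathsf{\Phi})\Second_y(\nabla_\theta\mathsf{\Phi},\nabla_\theta\mathsf{\Phi})^\top v_2\,d\theta\,dt,
\end{equation*}
and evaluating the $t$-integral then dividing by $\epsilon^2\mathbb E[\chi_{E(y)}(Y)]\sim\tfrac{P(x)|S^{d-1}|\epsilon^{d+2}}{d}$ recovers the prefactor $\tfrac{d\epsilon^2}{4|S^{d-1}|(d+4)}$. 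The $O(\epsilon^4)$ error rests on a second parity argument: the next crossed term, $\tfrac{t^5}{12}\bigl[(v_1^\top a_2)(v_2^\top a_3)^\top+(v_1^\top a_3)(v_2^\top a_2)^\top\bigr]$, has an integrand of odd total degree in $\theta$ (quadratic times cubic) and integrates to zero on $S^{d-1}$, so the first non-vanishing correction sits at order $t^6$.

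The main obstacle is the second-order identification: $\mathsf{\Phi}\circ\exp_x^M$ is not in general an $N$-geodesic, so the intrinsic $\nabla^N$-acceleration in $a_2$ is non-zero and must be carried through the expansion; the point is that it stays tangential to $\iota(N)$ and is therefore invisible after contraction with the normal vectors $v_i$. Once that decomposition is clean, everything else reduces to polar integrals on $T_xM$ controlled by standard moments of the uniform measure on $S^{d-1}$ and the vanishing of odd-degree integrals.
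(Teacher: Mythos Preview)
Your proof is correct and follows essentially the same route as the paper's: pull the integral back to $B_\epsilon(x)\subset M$ via $\mathsf{\Phi}^{-1}$, expand $\iota\circ\mathsf{\Phi}\circ\exp_x$ in normal polar coordinates, use $\int_{S^{d-1}}\theta\theta^\top\,d\theta=\tfrac{|S^{d-1}|}{d}I_d$ for the leading term, and invoke odd-in-$\theta$ parity to kill the sub-leading orders. The paper packages the second-order expansion as its Lemma~\ref{Lemma1ofTheorem2} (giving $\iota(z)-\iota(y)=\iota_*|_y\vartheta\,t+\tfrac12[\Second_y(\vartheta,\vartheta)+\iota_*|_y\nabla^2_{\theta\theta}\mathsf{\Phi}(x)]t^2+\cdots$), whereas you obtain the same splitting directly from the Gauss formula on $\iota(N)\subset\mathbb{R}^q$; the tangential piece $\iota_*|_y\nabla^2_{\theta\theta}\mathsf{\Phi}(x)$ is precisely your $\iota_*(\nabla^N_{\dot\eta}\dot\eta)|_0$, and both arguments discard it for the normal contraction in exactly the same way.
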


By this lemma, we see that the expansion of $\mathcal C_\epsilon(y)$ depends not only on the p.d.f. but also the Jacobian of the deformation, and the normalization step cancels this dependence. The proof is postponed to Appendix, page \pageref{Proof:Lemma2ofTheorem2}. Based on this lemma and the considered setup in \cite{Singer2008,Talmon2012}, we consider the following quantity.

\begin{definition}
For $y,z\in N$, define the {\em EIG distance of order $\alpha$} between $y,z$, where $1\leq \alpha\leq q$, as
\begin{gather}\label{EIG cont}
\textsf{EIG}^2_{\alpha}(y,z) = (\iota(y) - \iota(z))^\top  \left(\frac{\mathcal{T}_\alpha[\bar{\mathcal C}_{\epsilon}(y)]+\mathcal{T}_\alpha[\bar{\mathcal C}_{\epsilon}(z)]}{2}\right) (\iota(y) - \iota(z))\,.
\end{gather}
\end{definition}
Note that this definition is slightly different from that considered in (\ref{EIG}). {Since the only difference is the normalization step, the result below can be directly translated for (\ref{EIG}).} Below we show that the EIG distance between $y=\mathsf{\Phi}(x)\in N$ and $z=\mathsf{\Phi}(w)\in N$ is a good estimator of the geodesic distance between $x\in M$ and $w\in M$ only when some conditions are satisfied.

\begin{theorem} \label{Theorem2}
Suppose two $d$-dimensional smooth closed Riemannian manifolds $M$ and $N$ are diffeomorphic via $\mathsf{\Phi}:M\to N$, and suppose $N$ is isometrically embedded in $\mathbb{R}^q$ via $\iota$. 
Take $x\in M$ and $w \in B_{\epsilon}(x) \subset M$, where $\epsilon>0$, and denote $t$ to be the geodesic distance between $x$ and $w$. Let $y=\mathsf{\Phi}(x)$, $z=\mathsf{\Phi}(w)$. Take $\alpha \leq \min(\texttt{rank} [\bar{\mathcal C}_{\epsilon}(y)], \texttt{rank} [ \bar{\mathcal C}_{\epsilon}(z)] )$. When $\epsilon$ is sufficiently small, we have the following: 
\begin{enumerate}
\item When $\alpha=d$, we have 
\begin{align}
\mathsf{EIG}_{d}(y,z)= \sqrt{d+2}t+ O(t\epsilon^2)+O(t^3) \,; \nonumber
\end{align}

\item When $1\leq \alpha<d$, we have
\begin{align}
\mathsf{EIG}_{\alpha}(y,z)=\sqrt{d+2}\,t +O(t)\,.
\end{align}
Let $V_\alpha(y)$ and $V_\alpha(z)$ be the subspaces of $\iota_*T_yN$ and $\iota_*T_zN$ generated by the first $\alpha$ eigenvectors of $\bar{\mathcal C}_{\epsilon}(y)$ and $\bar{\mathcal C}_{\epsilon}(z)$ respectively. Suppose $z=\exp_y\vartheta(y)$ for $\vartheta(y) \in T_yN$ and $y=\exp_z \vartheta(z)$ for $\vartheta(z) \in T_zN$. 
If $\iota_{*}|_y \vartheta(y) \in V_\alpha(y)$ and $\iota_{*}|_z \vartheta(z) \in V_\alpha(z)$, we have
\begin{align}
\mathsf{EIG}_{\alpha}(y,z)=\sqrt{d+2}\,t +O(t\epsilon^2)+O(t^3)\,.
\end{align}

\item When $\alpha>d$,  assume that the smallest nonzero eigenvalues of $\bar{\mathcal C}_{\epsilon}(y)$ and $\bar{\mathcal C}_{\epsilon}(z)$ are of order $\epsilon^4$ {and there are $l_y\geq0$ and $l_z\geq0$ such eigenvalues respectively. 
In general, we have
\begin{align}
\mathsf{EIG}_{\alpha}(y,z)=\sqrt{d+2}\,t+O(t)\,.
\end{align}
When $l_y=0$ or $\alpha\leq q-l_y$ hold, and $l_z=0$ or $\alpha\leq q-l_z$ hold, for $t=\epsilon^\beta$, where $\beta>1$, we have 
\begin{align}\label{ResultAlphagreatthanD}
\mathsf{EIG}_{\alpha}(y,z)&=\sqrt{d+2}\,t+O(t(t/\epsilon+\epsilon)^2)\\
&=\sqrt{d+2}\,t+O(t^{1+\min\{1-1/\beta,1/\beta\}})\,.\nonumber
\end{align}}
\end{enumerate}
\end{theorem}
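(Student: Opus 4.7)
The plan is to combine the spectral decomposition of $\bar{\mathcal{C}}_\epsilon(y)$ (and $\bar{\mathcal{C}}_\epsilon(z)$) from Lemma \ref{Lemma2ofTheorem2} with the extrinsic expansion of $\iota(z)-\iota(y)$ from Lemma \ref{Lemma:1}, and then evaluate the quadratic form directly. Set $A:=\iota_*|_y\nabla\mathsf{\Phi}(x)\in\mathbb{R}^{q\times d}$; since $\mathsf{\Phi}$ is a diffeomorphism and $\iota_*$ is an isometry, $A$ has full column rank $d$. Lemma \ref{Lemma2ofTheorem2} gives $\bar{\mathcal{C}}_\epsilon(y) = \tfrac{1}{d+2}AA^\top + E$ with $\|E\|_{\mathrm{op}}=O(\epsilon^2)$. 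Because the nonzero eigenvalues of $AA^\top$ are bounded below (uniformly in $x$) by the squared least singular value of $\nabla\mathsf{\Phi}$, a Davis--Kahan argument then shows that the top $d$ eigenvectors of $\bar{\mathcal{C}}_\epsilon(y)$ span $\iota_*T_yN$ up to an $O(\epsilon^2)$ rotation, the top $d$ eigenvalues are $\Theta(1)$, and the remaining eigenvalues are $O(\epsilon^2)$ (with those of order $\epsilon^4$ confined to the normal block, as governed by the explicit integral in Lemma \ref{Lemma2ofTheorem2}). In particular $\mathcal{T}_d[\bar{\mathcal{C}}_\epsilon(y)] = (d+2)(AA^\top)^+ + O(\epsilon^2)$ in operator norm, and the algebraic identity $A^\top(AA^\top)^+A = I_d$ will do the bookkeeping later.

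Next I expand $\iota(z)-\iota(y)$. Let $\xi\in T_xM$ be unit with $w=\exp_x^M(\xi t)$, and set $\eta := (\exp_y^N)^{-1}(\mathsf{\Phi}(w))\in T_yN$, so that $\eta = \nabla\mathsf{\Phi}(x)\xi\, t + O(t^2)$. Lemma \ref{Lemma:1} applied on $N$ with $s=\|\eta\|$ gives
$$\iota(z) - \iota(y) = A\xi\, t + r_T(t) + r_N(t) + O(t^4),$$
where $r_T(t)\in\iota_*T_yN$ is the tangent correction of size $O(t^2)$ (coming from $\eta - \nabla\mathsf{\Phi}(x)\xi\, t$ together with the tangent part of the cubic Lemma \ref{Lemma:1} term) and $r_N(t)\in(\iota_*T_yN)^\perp$ is the $O(t^2)$ normal piece generated by $\tfrac{1}{2}\Second_y(\eta,\eta)$. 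A symmetric expansion with parallel-transported $\xi$ holds at $z$.

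For case (1), $(AA^\top)^+$ annihilates $(\iota_*T_yN)^\perp$ so $r_N$ drops out; the identity $\xi^\top A^\top(AA^\top)^+A\xi = 1$ combined with the operator-norm perturbation yields
$$(\iota(z)-\iota(y))^\top \mathcal{T}_d[\bar{\mathcal{C}}_\epsilon(y)](\iota(z)-\iota(y)) = (d+2)t^2 + O(t^2\epsilon^2) + O(t^4),$$
with the potential $O(t^3)$ cross term cancelling once we symmetrize with the contribution at $z$ (by geodesic reversal), and the square root delivers the claim. For case (2), $\mathcal{T}_\alpha[\bar{\mathcal{C}}_\epsilon(y)]$ is supported on a proper subspace $V_\alpha(y)\subsetneq\iota_*T_yN$, so only the projection of $A\xi t$ onto $V_\alpha(y)$ contributes; generically this projection has magnitude $\Theta(t)$ with an incorrect coefficient, forcing only $\mathsf{EIG}_\alpha(y,z)=O(t)$. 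Under the extra hypothesis $\iota_*\vartheta(y)\in V_\alpha(y)$ and $\iota_*\vartheta(z)\in V_\alpha(z)$, the entire leading tangent direction is retained and the case (1) computation goes through verbatim.

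Case (3) is the most delicate and I expect the main obstacle to lie here. The truncated inverse now also inverts eigenvalues of order $\epsilon^4$ on the normal block, so its operator norm on those directions is $O(\epsilon^{-4})$. Since $r_N(t)=O(t^2)$, its contribution to $\mathsf{EIG}_\alpha^2$ is of size $(t^2)^2\cdot\epsilon^{-4}=t^4/\epsilon^4$, giving
$$\mathsf{EIG}_\alpha^2(y,z) = (d+2)t^2 + O(t^4/\epsilon^4) + O(t^2\epsilon^2) + O(t^3) = (d+2)t^2\bigl(1 + O((t/\epsilon+\epsilon)^2)\bigr),$$
and specializing $t=\epsilon^\beta$, $\beta>1$, followed by a square root yields the advertised rate. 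The assumption $\alpha\leq q-l_y$ is precisely what guarantees that $\mathcal{T}_\alpha$ inverts only the $\epsilon^4$-block identified by Lemma \ref{Lemma2ofTheorem2} rather than accidentally smaller eigenvalues produced by degeneracies of the integrand $\int_{S^{d-1}}\Second_y(\nabla_\theta\mathsf{\Phi},\nabla_\theta\mathsf{\Phi})\Second_y(\nabla_\theta\mathsf{\Phi},\nabla_\theta\mathsf{\Phi})^\top d\theta$. The hard technical part is therefore a refined Davis--Kahan perturbation that simultaneously separates the $\Theta(1)$-block, the $\epsilon^4$-block, and any exact zeros in $\bar{\mathcal{C}}_\epsilon(y)$ and $\bar{\mathcal{C}}_\epsilon(z)$, and controls the rotation error on the normal eigenvectors tightly enough for the $r_N$ contribution to be captured to the stated precision.
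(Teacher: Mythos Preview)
Your treatment of cases (1) and (2) is essentially the paper's approach: the identity $\mathcal{T}_d[\bar{\mathcal C}_\epsilon(y)]=(d+2)(AA^\top)^++O(\epsilon^2)$ together with the expansion of $\iota(z)-\iota(y)$ and the symmetrization (which the paper packages as Lemma~\ref{Lemma1ofTheorem2}) is exactly what is used, and the underestimated case is likewise handled by comparing $\mathcal{T}_\alpha$ with $\mathcal{T}_d$.

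Case (3), however, contains a genuine gap. First, your reading of the hypothesis is inverted: the eigenvalues of $\bar{\mathcal C}_\epsilon(y)$ are ordered so that indices $1,\dots,d$ are $\Theta(1)$, indices $d+1,\dots,q-l_y$ are $\Theta(\epsilon^2)$, and indices $q-l_y+1,\dots,q$ are the $\Theta(\epsilon^4)$ block. Thus $\alpha\le q-l_y$ means precisely that $\mathcal{T}_\alpha$ \emph{avoids} the $\epsilon^4$ block, so its operator norm on the extra normal directions is $O(\epsilon^{-2})$, not $O(\epsilon^{-4})$. With this correction, the normal contribution is $(t^2+t\epsilon^2)^2/\epsilon^2=t^2(t/\epsilon+\epsilon)^2$, which is the stated error. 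Your displayed identity $t^4/\epsilon^4=t^2\,O((t/\epsilon+\epsilon)^2)$ is simply false (the right-hand side is $t^4/\epsilon^2+2t^3+t^2\epsilon^2$), so the advertised rate does not follow from your bound.

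Second, and more seriously, your crude estimate $r_N(t)=O(t^2)$ paired with a blanket $\epsilon^{-4}$ operator norm gives a contribution $t^4/\epsilon^4$ that, for $t\le\epsilon$, need not be $O(t^2)$; it would not even yield the general $O(t)$ statement. The paper closes this by a structural lemma (Lemma~\ref{Lemma3ofTheorem2}) that you are missing: the $\epsilon^4$ eigenspace is, up to $O(\epsilon^2)$, orthogonal to the range of $\Second_y(\cdot,\cdot)$. Since the $t^2$ part of $\iota(z)-\iota(y)$ in the normal direction is exactly $\tfrac12\Second_y(\vartheta,\vartheta)$, its projection onto the $\epsilon^4$ eigenvectors vanishes to leading order, and one gets $(\iota(z)-\iota(y))^\top u_j=O(t^3+t\epsilon^2)$ for those $j$, not $O(t^2)$. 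This improved projection estimate is what makes $(t^3/\epsilon^2+t)^2=O(t^2)$ work uniformly and yields the general $O(t)$ bound when the $\epsilon^4$ block \emph{is} inverted. A Davis--Kahan rotation bound alone will not produce this cancellation; you need the algebraic fact that a zero diagonal entry of the curvature matrix forces the corresponding component of $\Second_y$ to vanish identically.
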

The proof is postponed to Section \ref{Section:Appendix:Theorem2:proof}. 
We now have discussion of the theorem. 
In general, $\texttt{rank} [\bar{\mathcal C}_{\epsilon}(y)]$ and $\texttt{rank} [ \bar{\mathcal C}_{\epsilon}(z)]$ are different, thus we need the condition $\alpha \leq \min(\texttt{rank} [\bar{\mathcal C}_{\epsilon}(y)], \texttt{rank} [ \bar{\mathcal C}_{\epsilon}(z)] )$. 
First of all, if we know the dimension of the manifold, then the EIG distance of order $d$ between $y=\mathsf{\Phi}(x)\in N$ and $z=\mathsf{\Phi}(w)\in N$ is an accurate estimator of the geodesic distance between $x\in M$ and $w\in M$, up to a global constant $\sqrt{d+2}$. This result coincides with the claim in \cite{Singer2008,Talmon2012} when the sampling on $N$ is uniform. However, when the sampling on $N$ is non-uniform, the result is deviated by the p.d.f. if we replace the normalized local covariance matrix $\bar{\mathcal C}_{\epsilon}(y)$ in (\ref{EIG cont}) by the local covariance matrix $\mathcal C_{\epsilon}(y)$. This deviation can be seen from comparing (\ref{Expansion:Ceps:Lemma2}) and (\ref{Expansion:barCeps:Lemma2}) in Lemma \ref{Lemma2ofTheorem2}. 

Second, if the dimension is unknown and wrongly estimated, the result depends on the situation.
When the dimension is under-estimated, in general the estimator is wrong. In a non-generic situation that the geodesic direction from $y$ to $z$ and that from $z$ to $y$ are both located on the first $\alpha$ eigenvectors of the associated local covariant matrices, we may still obtain an accurate estimator. Note that due to the curvature, there is no guarantee that the first $\alpha$ eigenvectors of $\bar{\mathcal C}_{\epsilon}(y)$ will contain $\vartheta(y)$; even if it does, there is no guarantee that the first $\alpha$ eigenvectors of $\bar{\mathcal C}_{\epsilon}(z)$ will contain $\vartheta(z)$.
{When the dimension is over-estimated and some assumptions are satisfied, we still can obtain a reasonably well estimate of the intrinsic geodesic distance when $t/\epsilon$ is sufficiently small, but with a slow convergence rate, as is shown in \eqref{ResultAlphagreatthanD}. }

{Third, it is important to note that this estimate for the geodesic distance is valid only for points that are ``not too far away''. The estimate might be degenerate if two points are far away. For example, if $y\neq z\in N$ are two points so that the vector $\iota(z) - \iota(y)$ is orthogonal to the column space of $\mathcal{T}_\alpha[\mathcal{C}_\epsilon(y)] + \mathcal{T}_\alpha[\mathcal{C}_\epsilon(z)]$, then the quantity \eqref{EIG} is zero, and hence the degeneracy. This obviously destroys the topology of the manifold we have interest. A concrete example of this happening is when $y$ and $z$ are conjugate points on the sphere $N = S^d$. In practice, we thus should only evaluate $\mathsf{EIG}$ when two points are ``close'' in $N$.}

The assumption ``the smallest nonzero eigenvalues of $\bar{\mathcal C}_{\epsilon}(y)$ and $\bar{\mathcal C}_{\epsilon}(z)$ are of order $\epsilon^4$'' needs some discussion. The rule of thumb is that eigenvalues of $\bar{\mathcal C}_\epsilon(y)$ associated with the tangent directions are of order $1$, but the eigenvalues associated with the normal directions are of order $\epsilon^2$ or higher. The assumption that the smallest eigenvalues are of order $\epsilon^4$ means that when the principle curvature is zero, the higher order curvature is not too small. As is shown in the proof of this theorem, under this assumption, we can control the error induced by the interaction between $\iota(y)- \iota(z)$ and $\mathcal{T}_\alpha[\bar{\mathcal C}_{\epsilon}(y)]$. However, when there is an eigenvalue of even higher order, the interaction between $\iota(y)- \iota(z)$ and $\mathcal{T}_\alpha[\bar{\mathcal C}_{\epsilon}(y)]$ becomes more complicated, and a more delicate analysis involving a systematic recursive formula of Taylor expansion of  $\iota(y)- \iota(z)$ and $\mathcal{T}_\alpha[\bar{\mathcal C}_{\epsilon}(y)]$ is needed. We will explore this issue in our future work.

To sum up, in practice if we are not confident about the estimated dimension, we may want to choose a larger $\alpha$ if the dimension estimation is not accurate. Although we do not discuss it in this paper, we mention that this conservative way might not be that preferred if noise exists, since the noise will contaminate the eigenvector associated with the small eigenvalues. A more statistical approach to solve these issues will be studied in our future work.

\subsection{A special EIG setup --  covariance-corrected geodesic estimator}
When $\mathsf{\Phi}$ is identity, EIG is reduced to the ordinary manifold learning setup; that is, we have samples from the manifold and we want to estimate its geometric structure. By Lemma \ref{Lemma:1}, when two points on the smooth manifold are close enough, we can accurately estimate their geodesic distance $t$ by the ambient Euclidean distance $h=\|\iota(y)-\iota(x)\|_{\mathbb{R}^p}$ up to the {\em third} order $O(t^3)$, and the third order term is essentially the second fundamental form. We call $h$ the {\em Euclidean-distance-based geodesic distance estimator}.
In general, it is not an easy task to directly estimate the second fundamental form from the point cloud if $M'$ is not Euclidean space. However, when $M'$ is Euclidean, like the setup in Lemma \ref{Lemma:1}, the second fundamental form information could be well approximated by the local covariance matrix. We define the following projection operator associated with the local covariance matrix.

\begin{definition}
Suppose $M$ is a closed, smooth $d$ dimensional Riemannian manifold isometrically embedded in $ \mathbb{R}^p$ through $\iota$. Fix $x\in M$. For $\bar{h}>0$, let $ C_{\bar{h}}(x):= C_{x,B_{\bar{h}}^{\mathbb{R}^p}(\iota(x))\cap  \iota(M)}$ as defined in equation  (\ref{general def of covariancematrix}). Let $\lambda_{1} \geq \ldots  \geq \lambda_{p}$ be the eigenvalues of $C_{\bar{h}}(x)$, and $u_{1}, \ldots,  u_{p}$ be the corresponding normalized eigenvectors.  
Define $P^{\perp}_{\bar{h}}$ to be the orthogonal projection from $\mathbb{R}^p$ to the subspace spanned by $u_{d+1}, \ldots,  u_{p}$.
\end{definition}

The name orthogonal projection follows from Lemma \ref{prop1}, where we prove that $u_{d+1}, \ldots,  u_{p}$ deviate from an orthonormal basis of $(\iota_*T_xM)^\bot$ within a rotational error of order $O(\bar{h}^2)$.
Inspired by Lemmas \ref{prop1} and \ref{Lemma:1}, we have the following theorem. The proof is postponed to Section \ref{Section:Appendix:Thm1:proof}.

\begin{theorem} \label{Theorem1:geodesic}
Fix $x\in M$. Suppose that $y \in M$ and $\iota(y) \in B_{\bar{h}}^{\mathbb{R}^p}(\iota(x))$. We use the polar coordinate $(t,\theta)\in [0,\infty)\times S^{d-1}$ to parametrize $T_{x}M$ so that $y=\exp_{x}(\theta t)$ for $t>0$. Denote $h:=\|\iota(y)-\iota(x)\|_{\mathbb{R}^p}\leq\bar{h}$.
When $\bar{h}$ is small enough, 
\begin{equation}
\left\vert t-\left(h+\frac{\|P^{\perp}_{\bar{h}}(\iota(y)-\iota(x))\|^2_{\mathbb{R}^p}}{6h}\right) \right\vert =O(h^2\bar{h}^2)\,.
\end{equation}
\end{theorem}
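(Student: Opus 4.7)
\medskip

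\noindent\textbf{Proof proposal (sketch for Theorem \ref{Theorem1:geodesic}).}

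The plan is to show that the quantity $\frac{\|P^{\perp}_{\bar h}(\iota(y)-\iota(x))\|^2}{6h}$ is a good proxy for the leading curvature correction $\frac{\|\Second_x(\theta,\theta)\|^2}{24}h^3$ appearing in Lemma~\ref{Lemma:1}, and that using the empirical projector $P^{\perp}_{\bar h}$ in place of the exact normal projector $P^{\perp}:\mathbb{R}^p\to(\iota_*T_xM)^{\perp}$ only perturbs this proxy by $O(\bar h^2)$-type errors. First, I would translate and rotate the ambient space so that $\iota_*T_xM=\mathrm{span}\{\mathbf{e}_1,\ldots,\mathbf{e}_d\}$, as done in the paragraph preceding Lemma~\ref{prop1}. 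Under this normalization, the exact normal projector is simply $P^{\perp}=\mathrm{diag}(0_{d\times d},I_{(p-d)\times(p-d)})$, and Lemma~\ref{Lemma:1} gives the decomposition $\iota(y)-\iota(x)=v_{\parallel}+v_{\perp}$ where $v_{\parallel}=(\iota_*\theta)t$ and $v_{\perp}=\tfrac{1}{2}\Second_x(\theta,\theta)t^2+\tfrac{1}{6}\nabla_\theta\Second_x(\theta,\theta)t^3+O(t^4)$. In particular $\|v_{\perp}\|^2=\tfrac{1}{4}\|\Second_x(\theta,\theta)\|^2t^4+O(t^5)$.

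\smallskip

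Next I would use Lemma~\ref{prop1} to analyze $P^{\perp}_{\bar h}$. Writing $U_{\bar h}(x)=\mathrm{diag}(U_1,U_2)(I+\bar h^2\mathsf{S})+O(\bar h^4)$ and partitioning $\mathsf{S}=\bigl[\begin{smallmatrix}S_{11}&S_{12}\\-S_{12}^\top&S_{22}\end{smallmatrix}\bigr]$, a direct block computation of $[u_{d+1},\ldots,u_p][u_{d+1},\ldots,u_p]^\top$ yields
\begin{equation*}
P^{\perp}_{\bar h}-P^{\perp}=\begin{bmatrix}O(\bar h^4)&\bar h^2 U_1 S_{12}U_2^\top+O(\bar h^4)\\ \bar h^2 U_2 S_{12}^\top U_1^\top+O(\bar h^4)&O(\bar h^4)\end{bmatrix},
\end{equation*}
so $\|P^{\perp}_{\bar h}-P^{\perp}\|=O(\bar h^2)$ as an operator. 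Applying this to the split $v_{\parallel}+v_{\perp}$, I obtain $P^{\perp}_{\bar h}(\iota(y)-\iota(x))=v_{\perp}+\eta$ where $\eta:=(P^{\perp}_{\bar h}-P^{\perp})(\iota(y)-\iota(x))$ satisfies $\|\eta\|=O(\bar h^2 h)$. Expanding the square gives
\begin{equation*}
\|P^{\perp}_{\bar h}(\iota(y)-\iota(x))\|^2=\|v_{\perp}\|^2+2\langle v_{\perp},\eta\rangle+\|\eta\|^2=\tfrac{1}{4}\|\Second_x(\theta,\theta)\|^2 t^4+O(t^5)+O(\bar h^2 h^3),
\end{equation*}
where I have used $\|v_\perp\|=O(h^2)$ in the cross term.

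\smallskip

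Finally I would divide by $6h$ and combine with the expansion $t-h=\tfrac{1}{24}\|\Second_x(\theta,\theta)\|^2 h^3+O(h^4)$ from \eqref{oldLemma2b}, together with the identity $t^4/h=h^3+O(h^5)$ coming from Lemma~\ref{Lemma:1}. The leading curvature terms cancel exactly, and what remains are higher-order error terms of sizes $O(h^4)$ and $O(\bar h^2 h^2)$, both of which are absorbed by $O(h^2\bar h^2)$ since $h\leq \bar h$. This yields the claimed bound.

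\smallskip

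The main obstacle I anticipate is the careful bookkeeping of the cross term $\langle v_{\perp},\eta\rangle$: a naive bound $\|v_\perp\|\|\eta\|\leq Ch^2\cdot \bar h^2 h=C\bar h^2 h^3$ is already sharp enough after division by $h$, but one must verify that there is no hidden contribution of the form $\bar h^4 h^2$ coming from $\|\eta\|^2$ that would survive division by $h$ in a regime where $h\ll\bar h$. Exploiting the explicit block structure of $P^{\perp}_{\bar h}-P^{\perp}$ above—in particular the $O(\bar h^4)$ diagonal blocks—shows that the contribution of $\|\eta\|^2$ arising from the tangential component $v_\parallel$ of $\iota(y)-\iota(x)$ is actually $O(\bar h^4 h^2)$, which after division by $6h$ is $O(\bar h^4 h)$ and is controlled by $O(h^2\bar h^2)$ in the range of interest. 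Every other step is a routine application of the two lemmas already in hand.
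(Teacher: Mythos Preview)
Your proposal is correct and follows essentially the same route as the paper: both invoke Lemma~\ref{prop1} to control the deviation of $P^{\perp}_{\bar h}$ from the exact normal projector and Lemma~\ref{Lemma:1} to supply the curvature expansion, then match the leading $\tfrac{1}{24}\|\Second_x(\theta,\theta)\|^2 h^3$ terms. The only cosmetic difference is that the paper substitutes $t\mapsto h$ first and then compresses your block-structure analysis into the single line $P^{\perp}_{\bar h}(\iota(y)-\iota(x))=\tfrac12\Second_x(\theta,\theta)h^2+O(h\bar h^2)$, squaring directly, whereas you keep $v_\perp$ and $\eta$ separate and convert $t^4/h$ at the end.

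One remark on your final paragraph: the residual $\|\eta\|^2/(6h)=O(\bar h^4 h)$ that you correctly isolate is \emph{not} dominated by $O(h^2\bar h^2)$ in the regime $h\ll\bar h^2$, so the phrase ``in the range of interest'' is concealing an assumption. The paper's own proof generates exactly the same term (from squaring its $O(h\bar h^2)$ remainder) and absorbs it silently into the stated $O(h^2\bar h^2)$; so you are no less rigorous than the paper here, but if you want an unconditionally correct statement you should either record the error as $O(h^2\bar h^2+h\bar h^4)$ or add the mild hypothesis $h\gtrsim\bar h^2$.
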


Based on this Theorem, we have the following definition.
\begin{definition}
Following the notation in Theorem \ref{Theorem1:geodesic}, the {\em covariance-corrected geodesic distance estimator} between points $x,y\in M$ is defined as
\begin{equation}
h+\frac{\|P^{\perp}_{\bar{h}}(\iota(y)-\iota(x))\|^2_{\mathbb{R}^p}}{6h}\,.
\end{equation} 
\end{definition}
Clearly, the covariance-corrected geodesic distance estimator accurately estimates the geodesic distance $t$ up to the {\em fourth} order. 
It is worth mentioning that a better estimation for the geodesic distance cannot be directly achieved only with the local covariance matrix. 
Indeed, to estimate $d(x,y)$ within an error of $O(h^5)$, by Lemma \ref{Lemma:1}, we need to estimate $\nabla_{\theta}\Second_x(\theta,\theta)^\bot$ within an error of order $O(h)$. Thus, more information is needed if a more accurate geodesic distance estimate is needed.

\section{Locally Linear Embedding and a Variation by Truncation}\label{Sect:Other2}

LLE \cite{Roweis_Saul:2000} is a widely applied dimension reduction technique in the manifold learning society. Some of its theoretical properties have been explored in \cite{wu2017think}. For example, under the manifold setup, the barycentric coordinate is intimately related to the local covariance structure of the manifold. By reformulating the barycentric coordinate in the local covariance structure {framework}, the natural kernel associated with LLE is discovered, and it is very different from the kernel in graph Laplacian-based algorithms like Laplacian Eigenmaps \cite{belkin2003}, Diffusion Maps \cite{Coifman2006} or commute time embedding \cite{Qiu2007}. The regularization step in LLE is crucial in the whole algorithm, and it might provide different embeddings with different regularizations.
In this section, we connect LLE and EIG {via the local covariance structure}, and take the geometric structure to explore a variation of LLE via truncation.

\subsection{A summary of LLE algorithm}
We start with recalling the LLE algorithm in the finite sampling setup. 
Let $\mathcal{X}=\{x_i\}_{i=1}^n\subset \mathbb{R}^p$ denote a set of point cloud. 
Fix one point $x_k \in \mathcal{X}$. For $h>0$, assume $\mathcal{N}_{x_k}:=\mathcal{X} \cap \left(B^{\mathbb{R}^p}_{h}(x_k)  \setminus\{x_k\}\right)=\{x_{k,1}, \ldots, x_{k,N_k}\}$, where $h>0$. Here we take the $h$-radius ball to determine neighbors to be consistent with the following discussion. In practice we can use the $k$-nearest neighbors, where $k\in \mathbb{N}$ is determined by the user. The relationship between the $h$-radius ball and $k$-nearest neighbors is discussed in \cite[Section 5]{wu2017think}. Define the \textit{local data matrix associated with $x_k$} as 
\begin{equation*} 
G_{n,h}(x_k):=\begin{bmatrix}
| & & |\\
x_{k,1}-x_k & \cdots & x_{k,N_k}-x_k\\
| & & | 
\end{bmatrix}\in \mathbb{R}^{p \times {N_k}}.
\end{equation*}
With the local data matrix, the LLE algorithm is composed of three steps. First, for each $x_k\in \mathcal{X}$, find its barycentric coordinate associated with $\mathcal{N}_{x_k}$ by solving
\begin{equation}
w_{x_k}= \mathop{\textup{argmin}}_{w\in \mathbb{R}^{N_k},w^\top  \boldsymbol{1}_{N_k}=1} \|x_k-\sum_{j=1}^{N_k}w(j)x_{k,j} \|^2 \in \mathbb{R}^{N_k}. \label{Definition:FindBarycentric}
\end{equation}
Solving (\ref{Definition:FindBarycentric}) is equivalent to minimizing $w^\top G_{n,h}(x_k)^\top G_{n,h}(x_k)w$ over $w\in\mathbb{R}^{N_k}$ under the constraint $w^\top \boldsymbol{1}_{N_k}=1$. Since in general  $G_{n,h}(x_k)^\top G_{n,h}(x_k)$ is singular, it is recommended in \cite{Roweis_Saul:2000} to solve $(G_{n,h}(x_k)^\top G_{n,h}(x_k)+c I_{N_k\times N_k})y=\boldsymbol{1}_{N_k}$, where $c>0$ is the regularizer chosen by the user, and hence obtain 
\begin{equation}
w_{x_k}=\frac{(G_{n,h}(x_k)^\top G_{n,h}(x_k)+c I_{N_k\times N_k})^{-1}\boldsymbol{1}_{N_k}}{((G_{n,h}(x_k)^\top G_{n,h}(x_k)+c I_{N_k\times N_k})^{-1}\boldsymbol{1}_{N_k})^\top \boldsymbol{1}_N}.\label{Expansion:LLEweightedKernel0}
\end{equation}
By viewing $w_{x_k}$ as the ``affinity'' of $x_k$ and its neighbors, the second step is defining a {\em LLE matrix} $W\in \mathbb{R}^{n\times n}$  by
\begin{equation}\label{Definition:Wmatrix:LLE}
W_{k,l} = \left\{ 
\begin{array}{ll} 
w_{x_k}(j) & \mbox{if $x_l=x_{k,j} \in \mathcal{N}_{x_k}$};\\ 
0 & \mbox{otherwise}. 
\end{array} \right.
\end{equation}
Finally, to reduce the dimension of $\mathcal{X}$ or to visualize $\mathcal{X}$, it is suggested in \cite{Roweis_Saul:2000} to embed $\mathcal{X}$ into a low dimension Euclidean space
\begin{equation}
x_k\mapsto [v_1(k) ,\cdots, v_{\ell}(k)]^\top  \in \mathbb{R}^{\ell},
\end{equation}
for each $x_k\in \mathcal{X}$, where $\ell$ is the dimension of the embedded points chosen by the {user}, and $v_1, \cdots ,v_{\ell}\in \mathbb{R}^n$ are eigenvectors of $(I-W)^\top (I-W)$ corresponding to the $\ell$ smallest eigenvalues. 
Note that in general $W$ is not symmetric, and this is why the spectrum of $(I-W)^\top (I-W)$ is suggested but not that of $I-W$.

It is shown in \cite[Section 2]{wu2017think} that by taking the relationship between the {\em sample covariance matrix} $C_{n,h}(x_k) := G_{n,h}(x_k)G_{n,h}(x_k)^\top$ and the Gramian matrix $G_{n,h}(x_k)^\top G_{n,h}(x_k)$ into account, we can rewrite (\ref{Expansion:LLEweightedKernel0}) as
\begin{align}
w^\top _{x_k}&\,=\frac{\boldsymbol{1}_{N_k}^\top-\boldsymbol{1}_{N_k}^\top G_{n,h}(x_k)^\top \mathcal{I}_c(C_{n,h}(x_k) )   G_{n,h}(x_k)}
{N_k-\boldsymbol{1}_{N_k}^\top G_{n,h}(x_k)^\top \mathcal{I}_c(C_{n,h}(x_k))  G_{n,h}(x_k)\boldsymbol{1}_{N_k}}\,,
\label{Expansion:LLEweightedKernel}
\end{align} 
and view $\boldsymbol{1}_{N_k}^\top-\boldsymbol{1}_{N_k}^\top G_{n,h}(x_k)^\top \mathcal{I}_c(C_{n,h}(x_k) )   G_{n,h}(x_k)$ as the ``kernel'' associated with the LLE algorithm. It is clear that in general this kernel has negative values, and the LLE matrix is not a transition matrix.
By defining 
\begin{align}
\mathbf{T}_{n,x_k}:= \mathcal{I}_c(C_{n,h}(x_k))  G_{n,h}(x_k)\boldsymbol{1}_{N_k}\label{Definition:Tn}\,,
\end{align}    
we obtain the result claimed in \cite[Proposition 2.1]{wu2017think}.

\subsection{LLE and Mahalanobis distance in the continuous setup}

The above is for the general dataset. When the dataset $\mathcal{X}=\{\iota(x_i)\}_{i=1}^n\subset \iota(M)\subset \mathbb{R}^p$ is sampled from a manifold $M$, with (\ref{Expansion:LLEweightedKernel}) and (\ref{Definition:Tn}), the asymptotical behavior of LLE under the manifold setup is explored in \cite{wu2017think}. 
As is shown in \cite[(3.12)]{wu2017think}, in the continuous setup, the unnormalized kernel associated with LLE at $x\in M$ is 
\begin{equation}
K_h^{\texttt{LLE}}(x,y)=[1- \mathbf{T}_{\iota(x)}^\top(\iota(y)-\iota(x))]\chi_{B_{h}^{\mathbb{R}^p}(\iota(x)) \cap \iota(M)}(\iota(y)),\label{LLEKernel}
\end{equation}
where $y\in M$ and 
\begin{equation}
\mathbf{T}_{\iota(x)}:= \mathcal{I}_{c}(C_{h}(x))\big[\mathbb{E}(X-\iota(x))\chi_{B_{h}^{\mathbb{R}^p}(x)}(X)\big]    \in \mathbb{R}^p\,,\label{Definition:Tx:ContinuousCase}
\end{equation}
and hence the normalized kernel is defined by
\begin{gather}
P_h^{\texttt{LLE}}(x,y):=\frac{K_h^{\texttt{LLE}}(x,y)}{\int_M K_h^{\texttt{LLE}}(x,y)P(y)dV(y)}.
\end{gather} 
Clearly, (\ref{Expansion:LLEweightedKernel}) and (\ref{Definition:Tn}) are discretizations of $P_h^{\texttt{LLE}}(x,y)$ and $\mathbf{T}_{\iota(x)}$, and their convergence behavior could be found in \cite[(3.7) and Theorem 3.1]{wu2017think}.

There are two geometric facts we should mention. 
First, rewriting (\ref{Expansion:LLEweightedKernel0}) as (\ref{Expansion:LLEweightedKernel}) is equivalent to representing (\ref{Expansion:LLEweightedKernel0}) in the frame bundle setup, and hence an explicit form of the kernel underlying LLE.  
Second, the unnormalized kernel at $x$ involves projecting $\iota(y) - \iota(x)$ onto the vector space spanned by $u_{1},\ldots,u_{r}$, and scaling the resulting vector component-wise by  $(\lambda_{1} + c)^{-1/2}, \ldots, (\lambda_{ r} + c )^{-1/2}$.
When $r>d$, the involvement of the regularizer $c$ and the eigenvalues $\lambda_{d+1} ,\ldots, \lambda_{r}$ are associated with the normal bundle. An important intuition is that when $c\to 0$, if we put aside the expectation in $\mathbf{T}_{\iota(x)}^\top(\iota(y)-\iota(x))$ and replace $\mathbb{E}(X-\iota(x))$ by $(\iota(y)-\iota(x))$, we obtain the term $(\iota(y)-\iota(x))^\top\mathcal{T}_{r}(C_{h}(x)) (\iota(y)-\iota(x))$, which can be understood as a variation of the Mahalanobis distance. Therefore, we can interpret LLE as {{\em mixing}} the neighbor information (the 0-1 kernel in (\ref{LLEKernel}), that is, $\chi_{B_{h}^{\mathbb{R}^p}(\iota(x)) \cap \iota(M)}(\iota(y))$), and the Mahalanobis distance of neighboring points {(that is, $\mathbf{T}_{\iota(x)}^\top(\iota(y)-\iota(x))\chi_{B_{h}^{\mathbb{R}^p}(\iota(x)) \cap \iota(M)}(\iota(y))$ in (\ref{LLEKernel}))} into account to design the kernel.

Lemma~\ref{prop1} guarantees that the first $d$ eigenvalues of $C_{h}(x)$ are sufficiently large. However, the remaining $(r - d)$ non-trivial eigenvalues are small. Moreover, the remaining $(r-d)$ non-trivial eigenvalues are related to the curvature of the manifold. The importance of choosing the regularization constant $c$ has been extensively discussed in \cite[Theorem 3.2]{wu2017think}, and it is known that $c$ is to adjust the curvature information in these scaling factors $(\lambda_{d+1} + c )^{-1/2} ,\ldots, (\lambda_{r} + c )^{-1/2}$ for different purposes. To be more precise, the regularization $c$ plays a role of ``radio tuner''. The larger the $c$ is, the more enhanced the p.d.f. information will be; the smaller the $c$ is, the more dominated the curvature information will be \cite[Theorem 3.2]{wu2017think}.  
It is shown that if we want to obtain the Laplace-Beltrami operator of the Riemannian manifold, $c$ should be chosen properly so as to {\em suppress} the curvature information contained in the $(d+1)$-th to the $r$-th eigenvalues.

\subsection{A variation of LLE and EIG}
Under the manifold setup, the above discussion suggests that the curvature information associated with the $d+1,\ldots,r$ eigenvalues and eigenvectors is {\em not needed} if we want to obtain the Laplace-Beltrami operator. Therefore, an alternative to choosing a regularization parameter is a direct truncation by taking only the largest $d$ eigenvalues into account; that is, replace $\mathcal{I}_c[C_{h}(x)]$ in (\ref{LLEKernel}) by $\mathcal{T}_d[C_{h}(x)]$, and define the {\em normalized truncated LLE kernel} at $x\in M$ by
\begin{gather}
P_h^{\texttt{tLLE}}(x,y):=\frac{K_h^{\texttt{tLLE}}(x,y)}{\int_M K_h^{\texttt{tLLE}}(x,y)P(y)dV(y)}\,,
\end{gather} 
where
\begin{equation}
K_h^{\texttt{tLLE}}(x,y):=[1- \tilde{\mathbf{T}}_{\iota(x)}^\top(\iota(y)-\iota(x))]\chi_{B_{h}^{\mathbb{R}^p}(\iota(x)) \cap \iota(M)}(\iota(y)),\label{LLEKernel2}
\end{equation}
$y\in M$ and 
\begin{equation}
\tilde{\mathbf{T}}_{\iota(x)}:= \mathcal{T}_{d}(C_{h}(x))\big[\mathbb{E}(X-\iota(x))\chi_{B_{h}^{\mathbb{R}^p}(x)}(X)\big]    \in \mathbb{R}^p.\label{Definition:Tx:ContinuousCase2}
\end{equation}
Numerically when we have only finite data points, we run the same LLE algorithm, but replace $w^\top _{x_k}$ and $\mathbf{T}_{n,x_k}$ by the following terms:
\begin{align}
\tilde{w}^\top _{x_k}:=\frac{\boldsymbol{1}_{N_k}^\top-\tilde{\mathbf{T}}^\top_{n,x_k}  G_{n,h}(x_k)}{N_k-\tilde{\mathbf{T}}^\top_{n,x_k}  G_{n,h}(x_k)\boldsymbol{1}_{N_k}}\,,
\label{Expansion:tLLEweightedKernel}
\end{align} 
where
\begin{align}
\tilde{\mathbf{T}}_{n,x_k}:= \mathcal{T}_d(G_{n,h}(x_k) G_{n,h}(x_k)^\top)  G_{n,h}(x_k)\boldsymbol{1}_{N_k}\,;\label{Definition:tTn}
\end{align}    
that is, instead of running a regularized pseudo-inversion, we run a truncated inversion. {\em LLE with low-dimensional neighborhood representation} (LDR-LLE) proposed in \cite{Goldberg2008} is an algorithm related to this truncated idea.
While the geometric structure of local covariance matrix is not specifically discussed in LDR-LLE, it can be systematically studied in our framework. To simplify the nomination, we also call the above truncation scheme LDR-LLE.

Geometrically, $(\iota(y)-\iota(x))^\top \mathcal{T}_d[C_{h}(x)] (\iota(y)-\iota(x))$ evaluates geodesic distances between points in the $h$-neighbourhood of $x$ using a method directly related to both EIG in (\ref{EIG}) when $\mathsf\Phi$ is identity. With the normalized truncated LLE kernel, we can proceed with the standard LLE dimension reduction step. 
Therefore, the LDR-LLE as taking the neighbor information, $1\times \chi_{B_{h}^{\mathbb{R}^p}(\iota(x)) \cap \iota(M)}(\iota(y))$), and the EIG distance of neighboring points into account to design the kernel.

{Under the same condition specified in \cite[Theorem 3.2]{wu2017think}, LDR-LLE has the same asymptotical behavior} as that in \cite[Theorem 3.3]{wu2017think} with the properly chosen regularization; that is, 
\begin{theorem}\label{Theorem:TruncatedLLE}
Assume the setup in Section \ref{Sect:Prelims}. Take $f\in C^4(M)$. If $h=h(n)$ so that $\frac{\sqrt{\log(n)}}{n^{1/2}h^{d/2+1}}\to 0$ and $h\to 0$ when $n\to\infty$, with probability greater than $1-n^{-2}$, for all $x_k\in \mathcal{X}$ we have 
\begin{equation*}
\sum_{j=1}^{N_k}\tilde{w}_{n,x_k}(j)f(x_{k,j})=\int_M P_h^{\texttt{tLLE}}(x_k,y) f(y)P(y)dV(y)+O\left(\frac{\sqrt{\log(n)}}{n^{1/2}\epsilon^{d/2-1}}\right)
\end{equation*}
and
\begin{equation*}
\int_M P_h^{\texttt{tLLE}}(x,y) f(y)P(y)dV(y)=f(y)+\frac{h^2}{2(d+2)}\Delta  f(x)+O(h^3)\,.
\end{equation*}
\end{theorem}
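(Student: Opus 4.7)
The plan is to follow the template of the proof of \cite[Theorem 3.3]{wu2017think}, with the regularized pseudo-inverse $\mathcal{I}_c$ replaced throughout by the truncated pseudo-inverse $\mathcal{T}_d$. The two assertions of the theorem decouple: the second is a continuous Taylor expansion of the integral operator defined by $P_h^{\texttt{tLLE}}$, while the first is a finite-sample concentration statement about the discrete kernel $\tilde w_{n,x_k}$. I would handle the continuous part first, since it is there that the geometric content is cleanest.

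For the continuous expansion, I would write $\int_M P_h^{\texttt{tLLE}}(x,y) f(y) P(y) dV(y)$ as the ratio of two integrals against $K_h^{\texttt{tLLE}}$ and expand each in powers of $h$. By Lemma~\ref{prop1}, the first $d$ eigenvalues of $C_h(x)$ equal $\tfrac{|S^{d-1}| P(x) h^{d+2}}{d(d+2)}(1+O(h^2))$ with eigenvectors within an $O(h^2)$ rotation of an orthonormal frame of $\iota_* T_xM$, so that
\[
\mathcal{T}_d[C_h(x)] \;=\; \frac{d(d+2)}{|S^{d-1}| P(x) h^{d+2}}\,\Pi_{\iota_* T_xM} \;+\; O(h^{-d}),
\]
where $\Pi_{\iota_* T_xM}$ is the orthogonal projector onto $\iota_*T_xM$. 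Combining this with the normal-coordinate expansion of $\iota(y)-\iota(x)$ from Lemma~\ref{Lemma:1}, a routine integration against $P(y)dV(y)$ over $B_h^{\mathbb{R}^p}(\iota(x))\cap\iota(M)$ produces expansions identical in form to those in Sections~3.2--3.3 of \cite{wu2017think}. The conceptual point is that truncating at rank $d$ excises exactly those curvature-loaded eigenvalues of order $h^{d+4}$ lying in the $(p-d)$ normal directions before they enter the computation, mimicking the effect of a regularization $c$ chosen large enough to suppress the $(\lambda_{d+i}+c)^{-1}$ terms for $i\geq 1$. Once the same cancellations observed there are repeated, the Laplace--Beltrami term $\tfrac{h^2}{2(d+2)}\Delta f(x)$ emerges by the same mechanism.

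For the finite-sample part, I would combine three ingredients. First, Bernstein-type concentration, uniformly over the $n$ base points $x_k$, to show that $\tfrac{1}{n} G_{n,h}(x_k) G_{n,h}(x_k)^\top$ lies within operator-norm distance $O(\sqrt{\log n/(n h^d)})$ of $C_h(x_k)$, and analogously for $\mathbb{E}[(X-\iota(x))\chi_{B_h^{\mathbb{R}^p}(\iota(x))}(X)]$; these are the estimates of \cite[Propositions~3.1, 3.2 and Lemma~E.4]{wu2017think}. Second, a Davis--Kahan type perturbation argument converting operator-norm convergence of the sample covariance into convergence of $\mathcal{T}_d$ applied to it; the requisite eigengap is supplied by Lemma~\ref{prop1}, which separates the top $d$ eigenvalues (order $h^{d+2}$) from the remainder (order $h^{d+4}$) by a factor $\Theta(h^2)$. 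Third, assemble these inside the ratio defining $\tilde w^\top_{n,x_k}$ exactly as in \cite[Section~3]{wu2017think}, propagate the errors through numerator and denominator, and verify that the bandwidth hypothesis $\sqrt{\log n}/(n^{1/2}h^{d/2+1})\to 0$ suffices both to make the sample covariance concentrate well within the eigengap and to yield the claimed rate $O(\sqrt{\log n}/(n^{1/2}\epsilon^{d/2-1}))$.

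The main obstacle is the non-smooth dependence of $\mathcal{T}_d$ on its matrix argument: unlike $\mathcal{I}_c$, the truncated inverse is discontinuous across eigenvalue crossings, so no naive resolvent expansion applies. The remedy is to exploit the uniform spectral gap of Lemma~\ref{prop1}: under the stated bandwidth scaling, with probability at least $1-n^{-2}$, the sample perturbation is, uniformly in $x_k$, much smaller than this gap. The top-$d$ invariant subspace of the sample covariance is then a genuine $O(\sqrt{\log n/(n h^d)}/h^{d+2})$ perturbation of that of $C_h(x_k)$ in the Davis--Kahan $\sin\Theta$ sense, which legitimizes the replacement $\mathcal{T}_d[G_{n,h} G_{n,h}^\top] \rightsquigarrow \mathcal{T}_d[C_h]$ with a controlled error. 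Verifying this quantitative gap-versus-noise comparison uniformly in $x_k$ is where the bulk of the technical work lies; once in hand, the remainder is bookkeeping to track the errors through (\ref{Expansion:tLLEweightedKernel}) and (\ref{Definition:tTn}).
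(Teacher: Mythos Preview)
Your plan is essentially the paper's plan: split into a bias (continuous) part, handled by expanding $\mathcal{T}_d[C_h(x)]$ via Lemma~\ref{prop1} and integrating, and a variance (finite-sample) part, deferred to the concentration machinery of \cite{wu2017think}. The paper packages the bias computation slightly differently, first isolating the key identity $\tilde{\mathbf{T}}_{\iota(x)}=[\![J_{p,d}^\top\iota_*\nabla P(x)/P(x)+O(h^2),\,O(h^2)]\!]$ as a standalone lemma and then plugging it into the ratio; your scaled-projector formula for $\mathcal{T}_d[C_h(x)]$ yields this immediately when applied to the local mean, so the two routes coincide.

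There is, however, a quantitative slip in your finite-sample sketch that would derail the argument as written. The matrix Bernstein fluctuation of $\tfrac{1}{n}G_{n,h}G_{n,h}^\top$ about $C_h(x_k)$ is of order $h^{d/2+2}\sqrt{\log n/n}$, not $\sqrt{\log n/(nh^d)}$ (each summand has norm $\le h^2$ and variance proxy $O(h^{d+4})$). Dividing by the eigengap $\Theta(h^{d+2})$ then gives a Davis--Kahan subspace perturbation of order $\sqrt{\log n/(nh^d)}$, not $\sqrt{\log n/(nh^d)}/h^{d+2}$. Your stated Davis--Kahan rate does not even vanish under the bandwidth hypothesis, so the ``gap-versus-noise'' comparison you correctly identify as the crux would fail with those numbers. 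With the corrected rates the comparison goes through and the rest of your bookkeeping plan is sound.
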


As a result, with probability greater than $1-n^{-2}$, for all $x_k\in \mathcal{X}$ we have
\begin{equation*}
\frac{1}{h^2}\Big[\sum_{j=1}^{N_k}\tilde{w}_{n,x_k}(i)f(x_{k,j})-f(x_k)\Big]=\frac{1}{2(d+2)}\Delta  f(x)+O(h)+O\left(\frac{\sqrt{\log(n)}}{n^{1/2}\epsilon^{d/2+1}}\right)\,.
\end{equation*}
In other words, by the LDR-LLE, we recover the Laplace-Beltrami operator of the manifold even if the second fundamental form is non-trivial and the sampling is non-uniform. While the proof is a direct modification of the proof of \cite[Theorem 3.2]{wu2017think}, we provide the proof in Appendix \ref{Section:Appendix:Theorem3:proof} for the sake of self-containedness. We mention that the finite sample convergence has the same statement and follows exactly the same line as that of \cite[Theorem 3.1]{wu2017think}, so we skip it. 

Based on the theoretical development, we identify two benefits of LDR-LLE. First, the normal bundle information is automatically removed by the truncation, and we obtain the intrinsic geometric quantity $\Delta$. Second, the non-uniform sampling effect is automatically removed, and no kernel density estimation is needed. This comes from the fact that the EIG part of the kernel $\tilde{\mathbf{T}}_{\iota(x)}$ (or the Mahalanobis distance part of the kernel $\mathbf{T}_{\iota(x)}$) automatically provides the density information (see Lemma \ref{vector T}). Compared with DM, since we do not need normalization as is proposed in \cite{Coifman2006} to eliminate the non-uniform sampling effect, the convergence rate is faster, as is stated in \cite[Theorem 3.3]{wu2017think}. 

However, we emphasize that although this approach leads to aesthetic asymptotical properties under the manifold assumption, it may not work well when we do not have a manifold structure. For a general graph or point cloud, we found it more reliable to apply the original LLE, and a model or analysis explaining why is needed. Last but not least, note that this action requires that we know the dimension of the manifold, so even if we know the point cloud is sampled from a manifold, we need to estimate its dimension before applying the LDR-LLE. This requirement might render LDR-LLE less applicable in non-manifold data.

\section{Numerical Results}\label{Section Numerics}

We demonstrate some numerical results associated with the studied algorithms in this paper. Uniformly sample $n = 8000$ points from the logarithmic spiral $L \subset \Bbb R^2$, a 1-dim manifold parametrized as $x(s) = ( \frac{s}{\sqrt{2}} + 1 ) \cos \log ( \frac{s}{\sqrt{2}} + 1 )$ and $y(s) = ( \frac{s}{\sqrt{2}} + 1 ) \sin \log ( \frac{s}{\sqrt{2}} + 1 )$, and choose $\epsilon = 0.2$.  We fix $x_k\in L$ and plot the absolute error between the true geodesic distance and the Euclidean and covariance-corrected distances. The result is shown in Figure \ref{Fig:Examples}. Note that the two ``branches'' correspond to differences in curvature to either side of $x_k$. {In some manifold learning algorithms, knowing the local geodesic distance is not enough, and we need to know the global geodesic distance between any two points. ISOMAP \cite{tenenbaum2000} is a typical example. Usually, we estimate the global geodesic distance by applying Dijkstra's algorithm to find the shortest-path distance between all pairs, where the geodesic distance between neighboring points is estimated by the Euclidean distance. By combining the covariance-corrected geodesic distance estimator and Dijkstra's algorithm, we achieve a more accurate global geodesic distance, as is shown in Figure \ref{Fig:Examples}.}

\begin{figure} 
\centering
\includegraphics[width=0.49\textwidth]{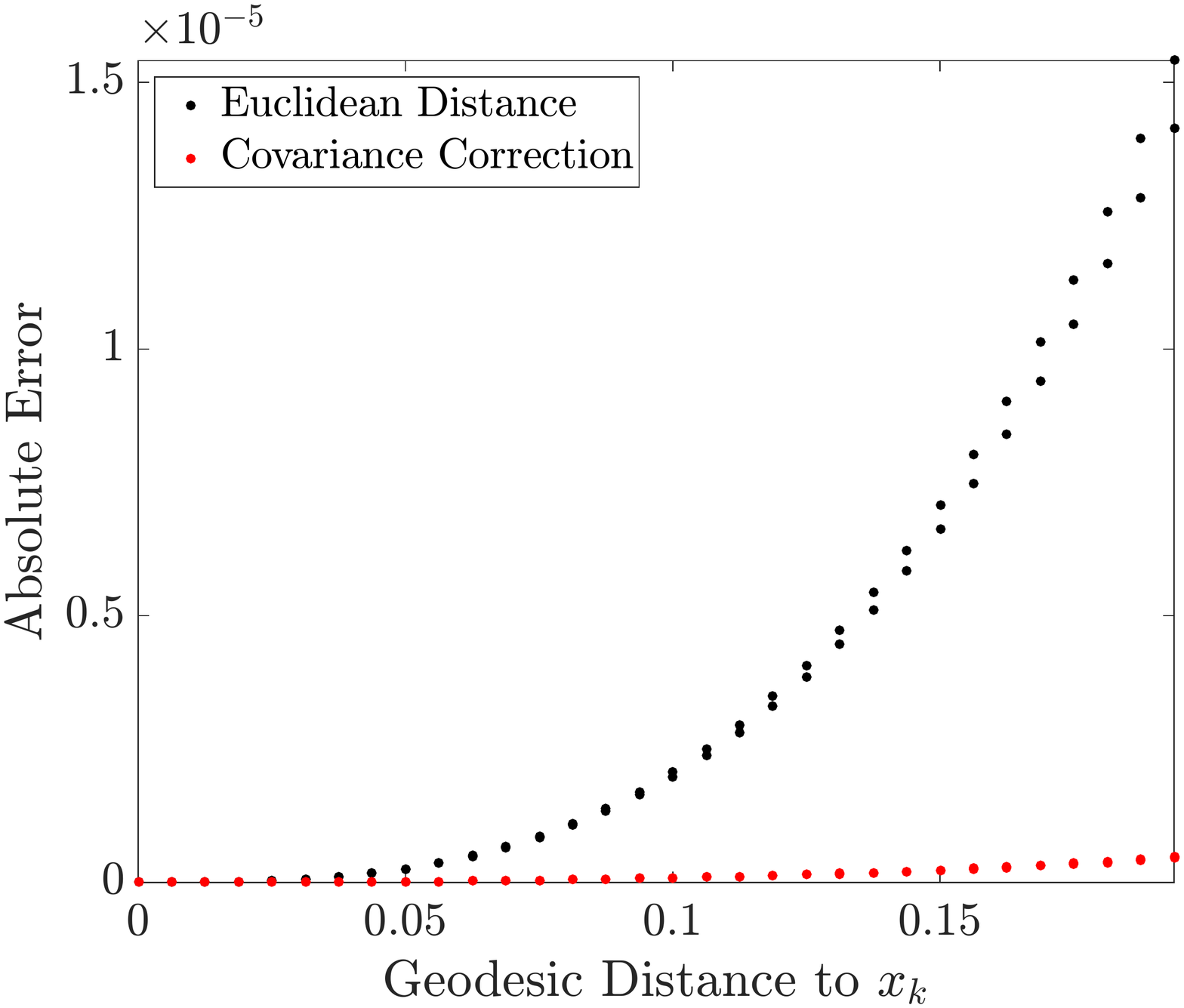}
\includegraphics[width=0.49\textwidth]{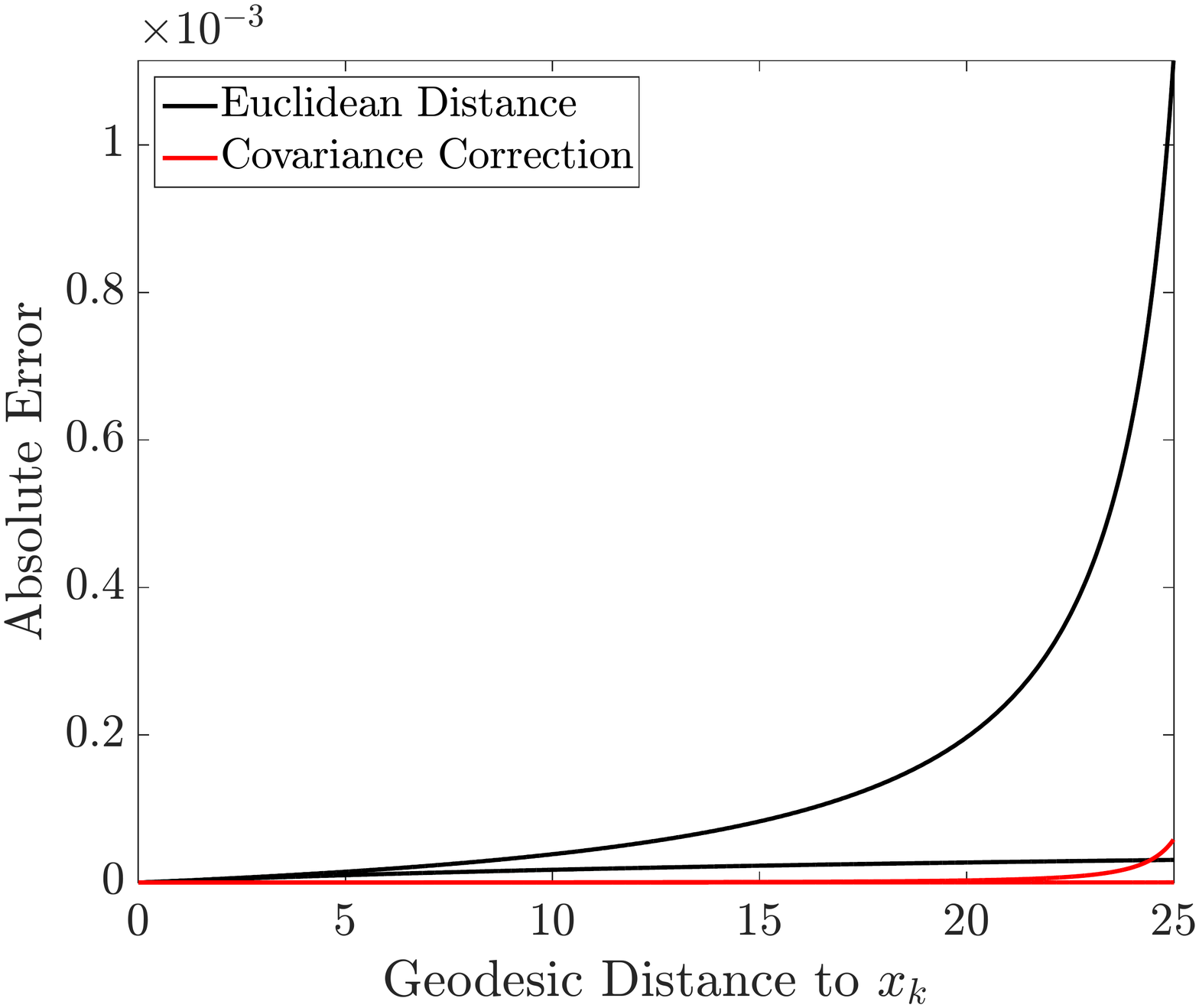}
\caption{Comparison of the Euclidean-distance-based geodesic distance estimator (black) and the covariance-corrected geodesic distance estimator (red) in the logarithmic spiral $L$. Left: local distance for $\epsilon=0.2$. Right: Global geodesic distance estimation by applying Dijkstra's algorithm with different local geodesic distance estimators.}\label{Fig:Examples}
\end{figure}

We show in Figure \ref{Fig:TruncLLE} that on $S^1\subset\mathbb{R}^2$ with a non-uniform sampling, the eigenvalues of the Laplace-Beltrami operator can be accurately recovered using the LDR-LLE. When compared with the eigenvalues obtained from the $1$-normalized DM, we see that the LDR-LLE performs better, which comes from the faster convergence rate of LDR-LLE.  We use $n = 8000$ points and we choose $h = 0.03$. The non-uniform sampling is obtained as follows. Sample $n$ points $\{\theta_i\}_{i=1}^n$ uniformly from $[0, 1]$ and set  
$x_i = ( \cos (2\pi( \theta_i + 0.3 \sin (\theta_i)) ), \sin ( 2\pi(\theta_i + 0.3\sin (\theta_i) )) ) \in S^1$, where $i = 1,\ldots, n$.

\begin{figure}
\centering
\includegraphics[width=0.5\textwidth]{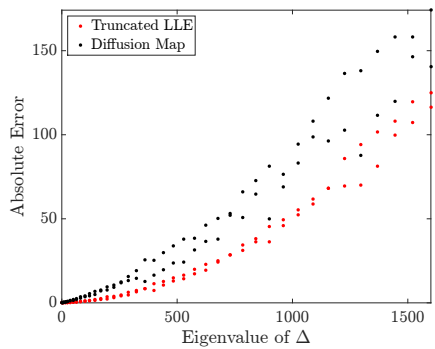}
\caption{The eigenvalues of the Laplace-Beltrami operator on $S^1$ are well-recovered by the LDR-LLE in the presence of a non-uniform sampling, compared with the $\alpha$-normalized DM.}\label{Fig:TruncLLE}
\end{figure}

\section{Conclusion}\label{Sect:Conclusion}
In this paper, we extend the knowledge of local covariance matrix, particularly the higher order expansion of the local covariance matrix, to study two commonly applied manifold learning algorithms, EIG and LLE. We provide a theoretical analysis of EIG under different situations under the manifold setup, and emphasize the importance of correctly estimating the dimension and its sensitivity to chosen parameters. The curvature effect is specifically carefully discussed.    
Under the trivial EIG setup when there is no deformation, that is, $\mathsf \Phi$ is identity, we show that the local covariance matrix structure allows us to obtain a more accurate geodesic distance for neighboring points. 
The geometric relationship between local covariance matrix and LLE leads to a natural generalization of LLE by taking EIG into account. We provide a theoretical justification of LDR-LLE and compare its pros and cons with the original LLE.

\section{Acknowledgement}
Hau-Tieng Wu thanks Professor Ronald Coifman, Professor Ronen Talmon, and Professor Stefan Steinerberger for various discussion of the topic.

\bibliographystyle{plain}
\bibliography{mybibfile}

\appendix

\renewcommand{\thelemma}{A.\arabic{lemma}}

\section{Proof of Theorem \ref{Theorem2}}\label{Section:Appendix:Theorem2:proof}

Suppose that $M$ and $N$ are $d$ dimensional closed Riemannian manifolds. $\mathsf{\Phi}:M\to N$ is a diffeomorphsim.  Moreover, suppose that $N$ is isometrically embedded in $\mathbb{R}^q$ via $\iota$.  In this section, all the derivatives of $\mathsf{\Phi}$ are calculated in normal coordinates. Hence for $x \in M$, $\nabla^k \mathsf{\Phi}(x): \underbrace{ T_x M \times \cdots \times T_xM}_{k} \rightarrow T_{\mathsf{\Phi}(x)} N$ is a multilinear map. We denote $\nabla^k \mathsf{\Phi}(x)(\theta, \cdots, \theta)= \nabla^k_{\theta \cdots \theta} \mathsf{\Phi}(x)$, for $\theta \in S^{d-1} \subset T_xM$.  For any $y \in \mathbb{R}^q$, we identify $T_{y} \mathbb{R}^q$ with $\mathbb{R}^q$. The proof of the theorem is composed of three lemmas, including Lemma \ref{Lemma2ofTheorem2}.

\begin{lemma}\label{Lemma1ofTheorem2}
Fix $x \in M$ and use polar coordinates $(t,\theta)\in [0,\infty)\times S^{d-1}$ to parametrize $T_{x}M$. For $w=\exp_x (\theta t)$ for sufficiently small $t$, let $y=\mathsf{\Phi}(x)$ and $z= \mathsf{\Phi}(w)$, then we have:
\begin{align}\label{Lemma1ofTheorem2 eqn1}
\iota (z)-&\iota (y)= \iota_*|_y \vartheta t+\frac{1}{2}\big[\Second_y(\vartheta,\vartheta)+ \iota_*|_y \nabla^2_{\theta \theta}\mathsf{\Phi}(x) \big]t^2    \\
&+\frac{1}{6}\big[\nabla_{\vartheta} \Second_y(\vartheta,\vartheta)+3\Second_y(\nabla^2_{\theta \theta} \mathsf{\Phi}(x),\vartheta)+ \iota_*|_y \nabla^3_{\theta \theta \theta}\mathsf{\Phi}(x)\big]t^3 +O(t^4), \nonumber 
\end{align}
where $\vartheta:=\nabla_\theta \mathsf{\Phi}(x)\in T_yN$, $\Second_y$ is the second fundamental form of $\iota(N)$ at $\iota(y)$,  and $\nabla \Second_y$ is the covariant derivative of the second fundamental form.
Moreover, we have
\begin{align}\label{Lemma1ofTheorem2 eqn2}
2t^2=\, & (\iota(z)-\iota(y) )^\top \iota_*|_y [\nabla\mathsf{\Phi}(x) \nabla\mathsf{\Phi}(x)^\top]^{-1} \iota_*|_y^\top (\iota(z)-\iota(y) )  \\
&+ \big(\iota(y)-\iota(z) \big)^\top \iota_*|_z [\nabla\mathsf{\Phi}(w) \nabla\mathsf{\Phi}(w)^\top]^{-1} \iota_*|_z^\top \big(\iota(y)-\iota(z) \big)+ O(t^4). \nonumber
\end{align}
\end{lemma}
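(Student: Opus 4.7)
The lemma has two parts, and I would treat them sequentially. For \eqref{Lemma1ofTheorem2 eqn1}, the plan is a two-stage Taylor expansion: first expand $\mathsf{\Phi}$ in normal coordinates based at $x$ on $M$ and at $y$ on $N$ to locate $z = \mathsf{\Phi}(w)$ inside a normal chart at $y$, then apply Lemma~\ref{Lemma:1} to expand $\iota(z) - \iota(y)$ in terms of $\iota_{*}|_y$, $\Second_y$, and $\nabla\Second_y$. For \eqref{Lemma1ofTheorem2 eqn2}, the plan is to feed part~1 into the quadratic form, use the isometric-embedding identities $\iota_{*}|_y^\top \iota_{*}|_y = I_{T_yN}$ and $\iota_{*}|_y^\top \Second_y = 0$ to reduce everything to $T_yN$, exploit the invertibility identity $J^\top(JJ^\top)^{-1}J = I_{T_xM}$ (valid since $J := \nabla\mathsf{\Phi}(x)$ is invertible), and finally exhibit a symmetric cancellation at order $t^3$ coming from geodesic reversal.

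For part~1, recall that in normal coordinates the Christoffel symbols vanish at the origin, so iterated covariant derivatives coincide with ordinary partials at the base point. A Taylor expansion of $\mathsf{\Phi}$ therefore gives the normal-coordinate representation of $z$ at $y$ as
\begin{equation*}
V(t) = \vartheta\, t + \tfrac{1}{2}\nabla^2_{\theta\theta}\mathsf{\Phi}(x)\, t^2 + \tfrac{1}{6}\nabla^3_{\theta\theta\theta}\mathsf{\Phi}(x)\, t^3 + O(t^4).
\end{equation*}
Lemma~\ref{Lemma:1}, extended from unit $\theta$ to arbitrary $V\in T_yN$ by homogeneity, gives
\begin{equation*}
\iota(\exp_y V) - \iota(y) = \iota_{*}|_y V + \tfrac{1}{2}\Second_y(V, V) + \tfrac{1}{6}\nabla_V \Second_y(V, V) + O(|V|^4).
\end{equation*}
Substituting $V = V(t)$, using bilinearity and symmetry of $\Second_y$ (which produces the factor $3$ in the cross term $\Second_y(\nabla^2_{\theta\theta}\mathsf{\Phi}(x), \vartheta)$ at order $t^3$), and collecting powers of $t$ reproduces \eqref{Lemma1ofTheorem2 eqn1}.

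For part~2, set $A := JJ^\top$. Applying $\iota_{*}|_y^\top$ to the expansion from part~1 annihilates the normal-space components (the $\Second_y$ terms) and acts as the identity on tangent components, giving
\begin{equation*}
\iota_{*}|_y^\top(\iota(z) - \iota(y)) = \vartheta\, t + \tfrac{1}{2}\nabla^2_{\theta\theta}\mathsf{\Phi}(x)\, t^2 + O(t^3).
\end{equation*}
Writing $\vartheta = J\theta$ and using $J^\top(JJ^\top)^{-1}J = I_{T_xM}$, the first term of \eqref{Lemma1ofTheorem2 eqn2} expands as $t^2 + \psi(0)\, t^3 + O(t^4)$, where
\begin{equation*}
\psi(u) := \bigl\langle \dot\gamma(u),\, \nabla\mathsf{\Phi}(\gamma(u))^{-1} \nabla^N_u \dot\beta(u)\bigr\rangle_{T_{\gamma(u)}M},\quad \beta := \mathsf{\Phi}\circ\gamma,\ \gamma(u):=\exp_x(\theta u).
\end{equation*}
Repeating the computation at $w$ along the reversed geodesic (initial direction $\theta_w = -P_\gamma \theta$, where $P_\gamma$ is parallel transport along $\gamma$, so $\vartheta' = -\nabla\mathsf{\Phi}(w) P_\gamma\theta$) gives the analogous expansion $t^2 - \psi(t)\, t^3 + O(t^4)$ for the second term; the sign flip comes from $\dot\gamma \mapsto -\dot\gamma$ under time reversal while the second-derivative quantity $\nabla^N_u \dot\beta$ is preserved. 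Summing, the $t^2$ terms combine to $2t^2$, and the $t^3$ terms combine to $t^3\bigl(\psi(0) - \psi(t)\bigr) = O(t^4)$ by smoothness of $\psi$, which yields \eqref{Lemma1ofTheorem2 eqn2}.

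The principal subtlety is the $t^3$ cancellation. Rather than showing each $t^3$ coefficient vanishes individually, I would recognize both coefficients as values of the single smooth scalar function $\psi$ at the two endpoints $u = 0$ and $u = t$ with opposite signs induced by geodesic reversal, so that the telescoped difference is automatically $O(t)$. Everything else amounts to bookkeeping in normal coordinates, plus the clean reduction $\vartheta^\top A^{-1}\vartheta = |\theta|^2 = 1$ that produces the leading $t^2$ term.
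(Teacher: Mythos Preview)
Your proposal is correct and follows essentially the same route as the paper: the paper also dispatches \eqref{Lemma1ofTheorem2 eqn1} as a direct Taylor expansion (your two-stage version simply makes this explicit), and for \eqref{Lemma1ofTheorem2 eqn2} it likewise applies $\iota_*|_y^\top$ to kill the $\Second_y$ terms, reduces via $[\nabla\mathsf{\Phi}(x)]^{-1}$ to obtain $t^2 + \gamma'(0)^\top[\nabla\mathsf{\Phi}(x)]^{-1}\nabla^2_{\gamma'(0)\gamma'(0)}\mathsf{\Phi}(x)\,t^3 + O(t^4)$, and cancels the $t^3$ term against its counterpart at $w$ by Taylor-expanding that same scalar along $\gamma$. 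Your packaging of this cancellation through the single smooth function $\psi$ with $\psi(0)-\psi(t)=O(t)$ is exactly the paper's argument, stated a bit more transparently.
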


Above lemma can be regarded as a generalization of Lemma \ref{Lemma:1}. Indeed, when $N=M$ and $\mathsf \Phi$ is the identity, $\nabla \mathsf \Phi$ is identity and we recover Lemma \ref{Lemma:1}.

\begin{proof}
Equation (\ref{Lemma1ofTheorem2 eqn1}) follows from a direct Taylor expansion. Since $t$ is small enough, let $\gamma:[0, t]\mapsto M$ to be the unique unit speed geodesic from $x$ to $w$; that is, $\gamma(0)=x$ and $\gamma(t)=w$. By equation (\ref{Lemma1ofTheorem2 eqn1}), we have 
\begin{align}
\iota (z)-\iota (y)=& \iota_*|_y \vartheta t+\frac{1}{2}\big[\Second_y(\vartheta,\vartheta)+ \iota_*|_y \nabla^2_{\gamma'(0) {\gamma'(0)}}\mathsf{\Phi}(x) \big]t^2+O(t^3)\,, \nonumber 
\end{align}
where $\vartheta:=\nabla_\theta \mathsf{\Phi}(x)=\nabla \mathsf{\Phi}|_x(\theta)$. Since the calculation is made in normal coordinates, we have $\iota_*|_y=\mathcal{O}_y J_{p,d}$, where $\mathcal{O}_y \in O(q)$ and $J_{q,d} \in \mathbb{R}^{q \times d}$ with $1$ on diagonal  entries and $0$ on others. Therefore, we have $\iota_*|_y^\top \iota_*|_y=I_{d \times d}$. Moreover, $\iota_*|_y^\top\Second_y(\nu, \nu)=0$ for any $\nu \in S^{d-1}$. Hence, if we multiply $[\nabla \mathsf{\Phi}(x)]^{-1}\iota_*|_y^\top$ to both sides of above equation, we have
\begin{align}
[\nabla \mathsf{\Phi}(x)]^{-1}\iota_*|_y^\top (\iota (z)-\iota (y))= \gamma'(0) t+\frac{1}{2} [\nabla \mathsf{\Phi}(x)]^{-1}\nabla^2_{\gamma'(0) \gamma'(0)}\mathsf{\Phi}(x) t^2+O(t^3). \nonumber 
\end{align}
Consequently, 
\begin{align}
&(\iota(z)-\iota(y) )^\top \iota_*|_y [\nabla\mathsf{\Phi}(x) \nabla\mathsf{\Phi}(x)^\top]^{-1} \iota_*|_y^\top (\iota(z)-\iota(y) ) \label{Lemma1ofTheorem2 eqn3} \\
=&\, (\iota(z)-\iota(y) )^\top \iota_*|_y [\nabla\mathsf{\Phi}(x)^{-1}]^\top \nabla\mathsf{\Phi}(x)^{-1} \iota_*|_y^\top (\iota(z)-\iota(y) ) \nonumber \\
=&\,  \gamma'(0)^\top \gamma'(0) t^2+\gamma'(0)^\top [\nabla \mathsf{\Phi}(x)]^{-1}\nabla^2_{\gamma'(0) \gamma'(0)}\mathsf{\Phi}(x) t^3+O(t^4) \nonumber \\
=&\, t^2+\gamma'(0)^\top [\nabla \mathsf{\Phi}(x)]^{-1}\nabla^2_{\gamma'(0) \gamma'(0)}\mathsf{\Phi}(x) t^3+O(t^4). \nonumber
\end{align}
Note that the last step follows from $ \gamma'(0)^\top \gamma'(0)=1$. 

Let $c:[0, t]\mapsto M$ be the unique geodesic  from $w$ to $x$ so that $c(0)=w$ and $c(t)=x$. Similarly, we obtain
\begin{align}
&(\iota(y)-\iota(z) )^\top \iota_*|_z [\nabla\mathsf{\Phi}(w) \nabla\mathsf{\Phi}(w)^\top]^{-1} \iota_*|_z^\top (\iota(y)-\iota(z) ) \label{Lemma1ofTheorem2 eqn4} \\
=&\, t^2+c'(0)^\top [\nabla \mathsf{\Phi}(w)]^{-1}\nabla^2_{c'(0) c'(0)}\mathsf{\Phi}(w) t^3+O(t^4) \nonumber \\
=&\, t^2-\gamma'(t)^\top [\nabla \mathsf{\Phi}(\gamma(t))]^{-1}\nabla^2_{\gamma'(t) \gamma'(t)}\mathsf{\Phi}(\gamma(t)) t^3+O(t^4) \nonumber \\
=&\, t^2-\gamma'(0)^\top [\nabla \mathsf{\Phi}(x)]^{-1}\nabla^2_{\gamma'(0) \gamma'(0)}\mathsf{\Phi}(x) t^3+O(t^4), \nonumber
\end{align}
where the second last equality comes from $w=\gamma(t)$, $c'(0)=-\gamma'(t)$ and the last equality comes from the Taylor expansion of $\gamma'(t)^\top [\nabla \mathsf{\Phi}(\gamma(t))]^{-1}\nabla^2_{\gamma'(t) \gamma'(t)}\mathsf{\Phi}(\gamma(t))$ at $t=0$. The conclusion follows from adding equations (\ref{Lemma1ofTheorem2 eqn3}) and (\ref{Lemma1ofTheorem2 eqn4}) together.
\end{proof}

\begin{proof}[Proof of Lemma \ref{Lemma2ofTheorem2}]\label{Proof:Lemma2ofTheorem2}
Fix $x\in M$ and use polar coordinates $(t,\theta)\in [0,\infty)\times S^{d-1}$ to parametrize $T_{x}M$.  Set $y'= \mathsf{\Phi} (x')$ and $x'=\exp_x(\theta t)$. 
{Let $X$ be the random variable defined on the probability space $(\Omega,\mathcal{F},\mathbb{P})$ with the range $M$, where $\mathbb{P}$ is the probability measure defined on the sigma algebra $\mathcal{F}$ in the event space $\Omega$. Define $\mathcal{P}=X_*\mathbb{P}$ to be the induced measure defined on the Borel sigma algebra on $M$. By the Radon-Nikodym theorem, $\mathcal{P}(x')=P(x') d V_M(x')$, where $P$ is the probablity density function of $X$ associated with the Riemannian volume measure, $d V_M$, defined on $M$. Furthermore, define $\mathcal{Q}$ to be the induced measure by $ \mathsf{\Phi}$ defined on the Borel sigma algebra on $N$; that is, $\mathcal{Q}=\mathsf{\Phi}_*\mathcal{P}=(\mathsf{\Phi}\circ X)_*\mathbb{P}$. By Radon-Nikodym chain rule,
\begin{align}
Q(y')=\frac{d \mathcal{Q}(y')}{d V_N(y')}=\frac{d \mathcal{P}(\mathsf{\Phi}^{-1}(y'))}{d V_M(\mathsf{\Phi}^{-1}(y'))} \frac{d V_M(\mathsf{\Phi}^{-1}(y'))}{d V_N(y')}=\frac{P(\mathsf{\Phi}^{-1}(y'))}{|\nabla\mathsf{\Phi} (\mathsf{\Phi}^{-1}(y'))|}
\end{align}
is the probability density function of the random variable $Y:=\mathsf{\Phi}\circ X$ associated with the Riemannian volume measure defined on $N$. The regularity of $Q$ is thus the same as that of $P$.}

By definition we have
\begin{equation}
\mathcal C_\epsilon(y)=\int_{E(y)} (\iota(y')-\iota(y))(\iota(y')-\iota(y))^\top Q(y')dV_N(y') \in \mathbb{R}^{q\times q}\,.
\end{equation}
By a direct Taylor expansion we have
\begin{align}
\mathcal C_\epsilon(y) =\,& \int_{B_\epsilon(x)} (\iota \circ \mathsf{\Phi} (x')-\iota \circ \mathsf{\Phi} (x))(\iota \circ \mathsf{\Phi} (x')-\iota\circ \mathsf{\Phi} (x))^\top \frac{P(x')}{ |\nabla\mathsf{\Phi} (x')|} |\nabla\mathsf{\Phi} (x')|dV_M(x') \nonumber \\
=\,& \int_{S^{d-1}}\int_{0}^{\epsilon}(\iota_*|_y \nabla \mathsf{\Phi} (x) \theta t+O(t^2))(\iota_*|_y \nabla \mathsf{\Phi} (x) \theta t+O(t^2))^\top (P(x)+O(t)) \nonumber \\
 & \quad \times (t^{d-1}+O(t^{d+1}))  dtd\theta \nonumber \\
=\,&  P(x) \int_{S^{d-1}}\int_{0}^{\epsilon}(\iota_*|_y \nabla \mathsf{\Phi} (x) \theta) (\iota_*|_y \nabla \mathsf{\Phi} (x) \theta)^\top t^{d+1}+O(t^{d+2}) dtd\theta \nonumber  \\
=\,& \frac{P(x) \epsilon^{d+2}}{d+2}[\iota_*|_y \nabla \mathsf{\Phi} (x)] \Big\{ \int_{S^{d-1}}\theta \theta^\top d\theta \Big\} [\iota_*|_y \nabla \mathsf{\Phi} (x)]^\top +O(\epsilon^{d+4}) \nonumber.
\end{align}
Note that all terms of order $t^{d+2}$  above contain a factor $\theta \theta^\top \theta$. Due to the symmetry of the sphere, $\int_{S^{d-1}} \theta \theta^\top \theta d\theta =0$. Hence, the error in the last step above is of order $O(\epsilon^{d+4})$ rather than $O(\epsilon^{d+3})$. Moreover, we have $\int_{S^{d-1}}\theta \theta^\top d\theta=\frac{|S^{d-1}|}{d} I_{d\times d}$, therefore
\begin{align}
\mathcal C_\epsilon(y) = \frac{|S^{d-1}|P(x) \epsilon^{d+2}}{d(d+2)}[\iota_*|_y\nabla \mathsf{\Phi}(x)][ {\nabla\mathsf{\Phi}(x)}^\top {\iota_*|_y}^\top] +O(\epsilon^{d+4}).
\end{align}
By a similar calculation, we have
\begin{align}
& \mathbb{E}[\chi_{E_{\epsilon}(y)}(Y)] =\int_{E(y)} Q(y')dV_N(y') \\
=\,& \int_{B_\epsilon(x)} P(x') dV_M(x') \nonumber \\
=\,& \int_{S^{d-1}}\int_{0}^{\epsilon} (P(x)+O(t))(t^{d-1}+O(t^{d+1}))  dtd\theta \nonumber \\
=\,& \frac{|S^{d-1}| P(x) \epsilon^{d}}{d}+O( \epsilon^{d+2}) \nonumber
\end{align}
Note that all the order $t^{d}$ terms above contain a factor $\theta $. Due to the symmetry of the sphere, $\int_{S^{d-1}} \theta d\theta =0$. Hence, the error in the last step above is of order $O(\epsilon^{d+2})$ rather than $O(\epsilon^{d+1})$.
The first result follows from taking the quotient of the above two equations.

By Lemma \ref{Lemma1ofTheorem2}, we have
\begin{align}
\iota \circ \mathsf{\Phi} (x')-\iota \circ \mathsf{\Phi} (x)=\,& \iota_*|_y \nabla_{\theta} \mathsf{\Phi}(x) t+\frac{1}{2}\big[\Second_y(\nabla_{\theta} \mathsf{\Phi}(x), \nabla_{\theta} \mathsf{\Phi}(x))+ \iota_*|_y \nabla^2_{\theta \theta}\mathsf{\Phi}(x) \big]t^2 +O(t^3). \nonumber 
\end{align}
If $v_1, v_2 \in (\iota_*T_{y}N)^\bot$,  since $\iota_*|_y \nabla_{\theta} \mathsf{\Phi}(x)$ and $\iota_*|_y \nabla^2_{\theta \theta}\mathsf{\Phi}(x)$ are in $\iota_*T_{y}N$, then 
\begin{align}
& v_1^\top  (\iota \circ \mathsf{\Phi} (x')-\iota \circ \mathsf{\Phi} (x))=\frac{1}{2}v_1^\top \Second_y(\nabla_{\theta} \mathsf{\Phi}(x), \nabla_{\theta} \mathsf{\Phi}(x))t^2 +O(t^3), \\
& v_2^\top  (\iota \circ \mathsf{\Phi} (x')-\iota \circ \mathsf{\Phi} (x))=\frac{1}{2}v_2^\top \Second_y(\nabla_{\theta} \mathsf{\Phi}(x), \nabla_{\theta} \mathsf{\Phi}(x))t^2 +O(t^3),
\end{align}
and we have 
\begin{align}
&v_1^\top  \mathcal C_\epsilon(y) v_2\nonumber\\
=\,& \int_{B_\epsilon(x)} v_1^\top (\iota \circ \mathsf{\Phi} (x')-\iota \circ \mathsf{\Phi} (x))(\iota \circ \mathsf{\Phi} (x')-\iota\circ \mathsf{\Phi} (x))^\top v_2 P(x') dV_M(x') \nonumber \\
=\,& \int_{S^{d-1}}\int_{0}^{\epsilon}\frac{1}{4}(v_1^\top \Second_y(\nabla_{\theta} \mathsf{\Phi}(x), \nabla_{\theta} \mathsf{\Phi}(x))t^2 +O(t^3))(v_2^\top \Second_y(\nabla_{\theta} \mathsf{\Phi}(x), \nabla_{\theta} \mathsf{\Phi}(x))t^2 +O(t^3))^\top  \nonumber \\ 
 & \quad  \times (P(x)+O(t)) (t^{d-1}+O(t^{d+1}))  dtd\theta \nonumber \\
=\,& \frac{ P(x)}{4} \int_{S^{d-1}}\int_{0}^{\epsilon}v_1^\top \Second_y(\nabla_{\theta} \mathsf{\Phi}(x), \nabla_{\theta} \mathsf{\Phi}(x)) (\Second_y(\nabla_{\theta} \mathsf{\Phi}(x), \nabla_{\theta} \mathsf{\Phi}(x)))^\top v_2 t^{d+3}+O(t^{d+4}) dtd\theta \nonumber  \\
=\,& \frac{P(x) \epsilon^{d+4}}{4(d+4)}\int_{S^{d-1}}v_1^\top \Second_y(\nabla_{\theta} \mathsf{\Phi}(x), \nabla_{\theta} \mathsf{\Phi}(x)) (\Second_y(\nabla_{\theta} \mathsf{\Phi}(x), \nabla_{\theta} \mathsf{\Phi}(x)))^\top v_2 d\theta+O(\epsilon^{d+6})\nonumber.
\end{align}
Due to the symmetry of the sphere, the error in the last step above is of order $O(\epsilon^{d+6})$ rather than $O(\epsilon^{d+5})$. The result follows from taking the quotient.
\end{proof}

To further control the influence of the truncated pseudo-inverse, we need the following lemma to further explore the higher order eigenvalue structure of the local covariance matrix. 

\begin{lemma}\label{Lemma3ofTheorem2}
Up to a rotation of $\mathbb{R}^q$, we have
\begin{align}
\bar{\mathcal C}_\epsilon(y) = \begin{bmatrix}  \frac{1}{d+2} \nabla \mathsf{\Phi}(x) {\nabla\mathsf{\Phi}(x)}^\top + O(\epsilon^2) & O(\epsilon^2) \\ O(\epsilon^2) & \frac{d}{4(d+4)} M \epsilon^2+O(\epsilon^4) \end{bmatrix},
\end{align}
where $M$ is a $(q-d)\times(q-d)$ diagonal matrix satisfying 
\begin{align}
M_{i,i}=\frac{1}{|S^{d-1}|}\int_{S^{d-1}} e_{q-d+i}^\top \Second_y(\nabla_{\theta} \mathsf{\Phi}(x), \nabla_{\theta} \mathsf{\Phi}(x)) (\Second_y(\nabla_{\theta} \mathsf{\Phi}(x), \nabla_{\theta} \mathsf{\Phi}(x)))^\top e_{q-d+i} d\theta.\label{Proof:Lemma3:Definition:M}
\end{align}
Moreover, if $M_{i,i}=0$, $e_{q-d+i}^\top \Second_y(\nabla_{\theta} \mathsf{\Phi}(x), \nabla_{\theta} \mathsf{\Phi}(x))=0$.
\end{lemma}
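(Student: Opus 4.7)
The plan is to build the desired rotation in two stages and read off each block of $\bar{\mathcal C}_\epsilon(y)$ directly from Lemma~\ref{Lemma2ofTheorem2}, with a supporting Taylor expansion from Lemma~\ref{Lemma1ofTheorem2} to control the cross block. First I would fix an orthogonal $R_1 \in \mathbb{O}(q)$ that sends $\iota_*T_yN$ onto $\mathrm{span}\{e_1,\dots,e_d\}$ and $(\iota_*T_yN)^\perp$ onto $\mathrm{span}\{e_{d+1},\dots,e_q\}$. After conjugating $\bar{\mathcal C}_\epsilon(y)$ by $R_1$, equation \eqref{Expansion:barCeps:Lemma2} immediately gives the tangent--tangent block, since in these coordinates $\iota_*|_y$ becomes the inclusion of the first $d$ axes; the result is $\frac{1}{d+2}\,\nabla\mathsf{\Phi}(x)\nabla\mathsf{\Phi}(x)^\top + O(\epsilon^2)$.

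Next I would handle the off-diagonal block. For unit $v_1 \in \iota_*T_yN$ and $v_2 \in (\iota_*T_yN)^\perp$, Lemma~\ref{Lemma1ofTheorem2} gives $v_1^\top(\iota\circ\mathsf{\Phi}(x')-\iota\circ\mathsf{\Phi}(x)) = v_1^\top \iota_*|_y \nabla_\theta\mathsf{\Phi}(x)\,t + O(t^2)$ (the $\Second_y$ contribution vanishes since $v_1$ is tangent), while $v_2^\top(\iota\circ\mathsf{\Phi}(x')-\iota\circ\mathsf{\Phi}(x)) = \tfrac{1}{2} v_2^\top \Second_y(\nabla_\theta\mathsf{\Phi}(x),\nabla_\theta\mathsf{\Phi}(x))\,t^2 + O(t^3)$. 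The product integrand is $O(t^3)$ with three factors of $\theta$, and integrating such a cubic-in-$\theta$ expression over $S^{d-1}$ yields zero by the same parity argument used in the proof of Lemma~\ref{Lemma2ofTheorem2}. Hence $v_1^\top \mathcal C_\epsilon(y) v_2 = O(\epsilon^{d+4})$, and after dividing by $\epsilon^{2}\,\mathbb{E}[\chi_{E(y)}(Y)] = \Theta(\epsilon^{d+2})$ this gives the claimed $O(\epsilon^2)$ on the off-diagonal.

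For the normal--normal block, I would apply the third identity in Lemma~\ref{Lemma2ofTheorem2} with $v_1,v_2$ ranging over $\{e_{d+1},\dots,e_q\}$. This produces a $(q-d)\times(q-d)$ symmetric positive semidefinite leading matrix $\widetilde M$ whose $(i,j)$ entry is $\frac{1}{|S^{d-1}|}\int_{S^{d-1}} e_{d+i}^\top \Second_y(\nabla_\theta\mathsf{\Phi}(x),\nabla_\theta\mathsf{\Phi}(x))\,\Second_y(\nabla_\theta\mathsf{\Phi}(x),\nabla_\theta\mathsf{\Phi}(x))^\top e_{d+j}\,d\theta$, so that this block equals $\frac{d}{4(d+4)}\widetilde M\,\epsilon^2 + O(\epsilon^4)$. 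Now choose a second rotation $R_2 \in \mathbb{O}(q-d)$ that diagonalizes $\widetilde M$, and assemble $R := R_1 \cdot \big(\mathrm{I}_{d} \oplus R_2\big)$. Under $R$ the tangent--tangent block is unchanged (only the first $d$ coordinates matter), the off-diagonal blocks remain $O(\epsilon^2)$ (an orthogonal conjugation preserves order estimates), and the normal--normal block becomes $\frac{d}{4(d+4)} M\,\epsilon^2 + O(\epsilon^4)$ with $M$ diagonal, yielding the desired form.

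The final claim about $M_{i,i}=0$ is essentially free once the diagonalizing basis is adopted: the integrand for $M_{i,i}$ is $\bigl(e_{q-d+i}^\top \Second_y(\nabla_\theta\mathsf{\Phi}(x),\nabla_\theta\mathsf{\Phi}(x))\bigr)^2$, a nonnegative continuous function on $S^{d-1}$, so vanishing of the integral forces pointwise vanishing. The only genuine care-point I anticipate is bookkeeping: making sure that the second rotation $R_2$, while diagonalizing the $\Theta(\epsilon^2)$ part of the normal block, does not silently promote any hidden lower-order cross term into the off-diagonal or tangent block. This is immediate because $R_2$ acts as identity on the first $d$ coordinates and because each $O(\epsilon^2)$ or $O(\epsilon^4)$ error survives orthogonal conjugation with the same order, so the block structure extracted from Lemma~\ref{Lemma2ofTheorem2} is stable under the composite rotation.
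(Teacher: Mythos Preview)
Your proposal is correct and follows essentially the same two-stage rotation strategy as the paper's proof: first rotate so that $\iota_*T_yN$ spans the first $d$ axes and read off the block structure from Lemma~\ref{Lemma2ofTheorem2}, then apply a second rotation in the normal directions to diagonalize the $(q-d)\times(q-d)$ leading coefficient. Your explicit parity argument for the off-diagonal block is not needed (the paper simply reads the $O(\epsilon^2)$ off-diagonal directly from \eqref{Expansion:barCeps:Lemma2}, since $\iota_*|_y\nabla\mathsf{\Phi}(x)$ has range in the tangent block), and your nonnegativity-of-the-integrand argument for the final claim is equivalent to the paper's observation that $e_{d+i}^\top\Second_y(\nabla_\theta\mathsf{\Phi}(x),\nabla_\theta\mathsf{\Phi}(x))$ is a quadratic form in $\theta$ whose $L^2$-norm vanishes.
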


\begin{proof}
First, we can rotate $\iota(N)$ so that $\iota_*T_{y}N$ is generated by $\{e_1,e_2, \cdots, e_d\}$. In other words, $\iota_*|_y=\begin{bmatrix} \mathcal{O}_1 \\ 0 \end{bmatrix} \in \mathbb{R}^{q \times d}$, where $\mathcal{O}_1 \in O_{d}$. Then, by the previous lemma we have
 \begin{align}
\bar{\mathcal C}_\epsilon(y) = \begin{bmatrix}  \frac{1}{d+2} \mathcal{O}_1  \nabla \mathsf{\Phi}(x) {\nabla\mathsf{\Phi}(x)}^\top \mathcal{O}_1 ^\top + O(\epsilon^2) & O(\epsilon^2) \\ O(\epsilon^2) & \bar{M} \epsilon^2+O(\epsilon^4) \end{bmatrix},
\end{align}
where $\bar{M}$ is a $q-d$ by $q-d$ symmetric matrix. If $\mathcal{O}_2$ diagonalizes $\bar{M}$, let $\mathcal{O}=\begin{bmatrix} \mathcal{O}_1 & 0 \\ 0 & \mathcal{O}_2 \end{bmatrix}$,
and we have
 \begin{align}
\mathcal{O} ^\top \bar{\mathcal C}_\epsilon(y) \mathcal{O}= \begin{bmatrix}  \frac{1}{d+2} \nabla \mathsf{\Phi}(x) {\nabla\mathsf{\Phi}(x)}^\top + O(\epsilon^2) & O(\epsilon^2) \\ O(\epsilon^2) & \frac{d}{4(d+4)} M \epsilon^2+O(\epsilon^4) \end{bmatrix},\nonumber
\end{align}
where $M$ is a $(q-d)\times(q-d)$ diagonal matrix. And by the previous lemma
\begin{align}
 M_{i,i}=\frac{1}{|S^{d-1}|}\int_{S^{d-1}} e_{q-d+i}^\top \Second_y(\nabla_{\theta} \mathsf{\Phi}(x), \nabla_{\theta} \mathsf{\Phi}(x)) (\Second_y(\nabla_{\theta} \mathsf{\Phi}(x), \nabla_{\theta} \mathsf{\Phi}(x)))^\top e_{q-d+i} d\theta.\nonumber
\end{align}
Note that if $\theta=\sum_{j=1}^d \theta_j \partial_j\in S^{d-1} \subset T_{y}N$, then $e_{q-d+i}^\top \Second_y(\nabla_{\theta} \mathsf{\Phi}(x), \nabla_{\theta} \mathsf{\Phi}(x))=p_i(\theta_1, \theta_2, \cdots ,\theta_d)$ where $p_i$ is a quadratic form. If $ M_{i,i}=0$, then $\int_{S^{d-1}} p_i^2 d\theta=0$. Hence. $p_i=0$. The conclusion follows.
\end{proof}

We are now ready to finish the proof of Theorem \ref{Theorem2}.

\begin{proof}[Proof of Theorem \ref{Theorem2}]
Note that  $(\iota(z)-\iota(y) )^\top \mathcal{T}_\alpha [\bar{\mathcal C}_\epsilon(y)] (\iota(z)-\iota(y))$ is invariant under any rotation of $\mathbb{R}^q$. Therefore, by Lemma \ref{Lemma3ofTheorem2} we can assume that 
\begin{align}
\bar{\mathcal C}_\epsilon(y) = \begin{bmatrix}  \frac{1}{d+2} \nabla \mathsf{\Phi}(x) {\nabla\mathsf{\Phi}(x)}^\top + O(\epsilon^2) & O(\epsilon^2) \\ O(\epsilon^2) & \frac{d}{4(d+4)} M \epsilon^2+O(\epsilon^4) \end{bmatrix},
\end{align}
where $M$ is a $(q-d)\times(q-d)$ diagonal matrix shown in (\ref{Proof:Lemma3:Definition:M}).  
Suppose that $M$ has $l\geq 0$ zero diagonal entries. By assumption, we have {$q-d-l$} eigenvalues of order $\epsilon^2$ that are related to principle curvatures, and $l$ eigenvalues of order $\epsilon^4$. Write the eigendecomposition of $\bar{\mathcal C}_\epsilon(y)$ as
\begin{equation} 
\bar{\mathcal C}_\epsilon(y)=U_{\epsilon}(x)\Lambda_{\epsilon}(x)U_{\epsilon}(x)^\top\,,
\end{equation}
where $U_{\epsilon}(x)\in {O(q)}$ and $\Lambda_{\epsilon}(x)$ is a {$q\times q$} diagonal matrix.
By the perturbation argument  (see Appendix A of \cite{wu2017think} for details), $\Lambda_{\epsilon}(x)$ and $U_{\epsilon}(x)$ satisfy
\begin{equation}\label{proofoftheorem2 eigenvalue}
\Lambda_{\epsilon}(x)=\begin{bmatrix} \frac{1}{d+2} \Lambda_{1}(x) +O(\epsilon^2) & 0 & 0\\ 0 & \frac{d}{4(d+4)} \Lambda_{2}(x) \epsilon^2+O(\epsilon^4) & 0 \\0 & 0 & O(\epsilon^4)\end{bmatrix} \,,
\end{equation}
where $\Lambda_{1}(x)$ is the eigenvalue matrix of $\nabla \mathsf{\Phi}(x) {\nabla\mathsf{\Phi}(x)}^\top$, which is of order $1$, and $ \Lambda_{2}(x)$ is a $(q-d-l) \times (q-d-l)$ diagonal matrix which consists of nonzero diagonal entries of $M$, which is also of order $1$, and 
\begin{equation}\label{proofoftheorem2 eigenvector}
U_{\epsilon}(x)=\begin{bmatrix}U_{1}(x) & 0 & 0 \\ 0 & U_{2}(x) & 0 \\ 0 & 0 & U_{3}(x) \end{bmatrix}(I_{p\times p}+\epsilon^2 \mathsf{S}(x))+O(\epsilon^4), 
\end{equation}
where {$U_{1}(x)\in \mathbb{O}(d)$ }is the orthornormal eigenvector matrix of  $\mathsf{\Phi}(x) {\nabla\mathsf{\Phi}(x)}^\top$, $U_{2}(x)\in O(q-d-l)$, $U_{3}(x)\in O(l)$ and $\mathsf{S}(x)$ is anti-symmetric.\footnote{We mention that if the eigenvalues of $\mathsf{\Phi}(x) {\nabla\mathsf{\Phi}(x)}^\top$ repeat,  $U_{1}(x)$ may not be uniquely determined, or if the nonzero diagonal terms of $ \Lambda_{2}(x)$ repeat, $U_{2}(x)$ may not be identity matrix. In this case, if one wants to fully determine $U_{1}(x)$, $U_{2}(x)$ and $U_{3}(x)$, he/she needs to further explore the higher order expansion of $\bar{\mathcal C}_\epsilon(y)$. Since it is irrelevant to this proof, we refer readers with interest to Appendix A of \cite{wu2017think}.} We now finish the proof. 

\underline{\textbf{Case 1: $\alpha=d$.}} In this case, 
\begin{align}
\mathcal{T}_d[\bar{C}_\epsilon(y)]=\,&(d+2)\begin{bmatrix} U_{1}(x)+O(\epsilon^2) \\ O(\epsilon^2)\end{bmatrix} [  \Lambda_{1,\epsilon}^{-1}(x)+O(\epsilon^2)] \begin{bmatrix} U_{1}(x)^\top +O(\epsilon^2)& O(\epsilon^2)\end{bmatrix} \nonumber \\
=\, & (d+2)\begin{bmatrix} [\nabla \mathsf{\Phi}(x) {\nabla\mathsf{\Phi}(x)}^\top]^{-1} &0 \\ 0 & 0 \end{bmatrix}+ O(\epsilon^2) \nonumber \\
=\, &(d+2) \iota_*|_y [\nabla\mathsf{\Phi}(x) \nabla\mathsf{\Phi}(x)^\top]^{-1} \iota_*|_y^\top+O(\epsilon^2). \nonumber 
\end{align}
Therefore,
\begin{align}
& (\iota(z)-\iota(y) )^\top \mathcal{T}_d[\bar{\mathcal C}_\epsilon(y)] (\iota(z)-\iota(y) ) \\
=\, & (d+2) (\iota(z)-\iota(y) )^\top  \iota_*|_y [\nabla\mathsf{\Phi}(x) \nabla\mathsf{\Phi}(x)^\top]^{-1} \iota_*|_y^\top (\iota(z)-\iota(y) ) +O(\epsilon^2\|\iota(y)-\iota(z)\|_{\mathbb{R}^q}^2) \nonumber \\
=\, & (d+2)(\iota(z)-\iota(y) )^\top  \iota_*|_y [\nabla\mathsf{\Phi}(x) \nabla\mathsf{\Phi}(x)^\top]^{-1} \iota_*|_y^\top (\iota(z)-\iota(y) ) +O(\epsilon^2t^2)\,, \nonumber
\end{align}
where the last step follows from Lemma \ref{Lemma1ofTheorem2} that $O(\|\iota(y)-\iota(z)\|_{\mathbb{R}^q}^2)=O(t^2)$.
Similarly, 
\begin{align}
& (\iota(y)-\iota(z) )^\top \mathcal{T}_d[\bar{\mathcal C}_\epsilon(z)] (\iota(y)-\iota(z) ) \\
=\, &(d+2)  (\iota(y)-\iota(z) )^\top  \iota_*|_z [\nabla\mathsf{\Phi}(w) \nabla\mathsf{\Phi}(w)^\top]^{-1} \iota_*|_z^\top (\iota(y)-\iota(z) ) +O(\epsilon^2t^2). \nonumber
\end{align}
By Lemma \ref{Lemma1ofTheorem2}, we have
\begin{align}
\mathsf{EIG}_{d}^2(y,z)=&\, (\iota(z)-\iota(y) )^\top \Big[ \frac{\mathcal{T}_d[\bar{\mathcal C}_\epsilon(y)]+\mathcal{T}_d[\bar{\mathcal C}_\epsilon(z)]}{2}\Big](\iota(z)-\iota(y) )\nonumber\\
=&\,(d+2)t^2 +O(t^2\epsilon^2)+O(t^4)\,. \nonumber 
\end{align}

\underline{\textbf{Preparation for $\alpha \neq d$.}} 
To show the case when $\alpha \not = d$, we need some preparations.
Let $\lambda_1 \geq \lambda_2 \geq \cdots \lambda_q$ denote the eigenvalues of $\bar{\mathcal C}_\epsilon(y)$, and let $\{ u_i \}$ be the corresponding normalized eigenvectors. Note that $u_i$ is the $i$-th column of $\begin{bmatrix} U_{1}(x) \\ 0\\ 0\end{bmatrix}+O(\epsilon^2)$ when $1\leq i\leq d$ in (\ref{proofoftheorem2 eigenvector}). Based on (\ref{proofoftheorem2 eigenvector}), for $1\leq i \leq d$, the first $d$ entries of $u_i$ are of order $1$ and the other entries are of order $\epsilon^2$. Note that the first $d$ entries of $u_i$ are associated with the tangent space and the others are associated with the normal space. For $d+1\leq i \leq q-l$, the tangent components of $u_i$ (the first $d$ entries) are of order $\epsilon^2$ and the normal components of $u_i$ (the remaining $p-d$ entries) are of order $1$. On the other hand, based on Lemma \ref{Lemma1ofTheorem2}, the tangent component of $\iota(z)-\iota(y)$ is of order $t$  and the normal component of $\iota(z)-\iota(y)$ is of order $t^2$. Following the notation in Lemma \ref{Lemma1ofTheorem2} and putting the above together, for $1\leq i\leq p-l$, we have 
\begin{align}
(\iota(z)-\iota(y) )^\top u_i=O(t) & \quad  \mbox{for $1 \leq i \leq d$}, \label{proofoftheorem2 1tod} \\ 
(\iota(z)-\iota(y) )^\top u_i=O(t^2+t\epsilon^2) & \quad \mbox{for $d+1 \leq i \leq q-l$}.\label{proofoftheorem2 d+1toq-l}
\end{align}
For $q-l+1 \leq i \leq q$, we have $u_i=\bar{u}_{i}+O(\epsilon^2)$, where $\bar{u}_{i}$ is $(i-q-l)$'s column of $\begin{bmatrix} 0 \\ 0 \\ U_{3}(x)\end{bmatrix}$ shown in (\ref{proofoftheorem2 eigenvector}).
By Lemma \ref{Lemma1ofTheorem2}, $\bar{u}_{i}^\top (\iota(z)-\iota(y) )$ becomes
\begin{align}
\bar{u}_{i}^\top  \iota_*|_y \nabla_{\theta} \mathsf{\Phi}(x) t+\frac{1}{2}\big[\bar{u}_{i}^\top \Second_y(\nabla_{\theta} \mathsf{\Phi}(x), \nabla_{\theta} \mathsf{\Phi}(x))+\bar{u}_{i}^\top  \iota_*|_y \nabla^2_{\theta \theta}\mathsf{\Phi}(x) \big]t^2 +O(t^3). \nonumber 
\end{align}
Note that  $\bar{u}_{i}^\top  \iota_*|_y \nabla_{\theta} \mathsf{\Phi}(x)=0$ and $\bar{u}_{i}^\top  \iota_*|_y \nabla^2_{\theta \theta}\mathsf{\Phi}(x)=0$, since $\bar{u}_{i}$ is in the normal direction. By Lemma \ref{Lemma3ofTheorem2}, we have 
$\bar{u}_{i}^\top \Second_y(\nabla_{\theta} \mathsf{\Phi}(x), \nabla_{\theta} \mathsf{\Phi}(x))=0$. 
Therefore $\bar{u}_{i}^\top (\iota(z)-\iota(y) )=O(t^3)$. 
Since $u_i-\bar{u}_{i}$ is of order $O(\epsilon^2)$,  $\iota(z)-\iota(y)$ is of order $O(t)$, we have $(u_i-\bar{u}_{i})^\top (\iota(z)-\iota(y) )=O(t \epsilon^2)$.  Putting the above together gives 
\begin{align}
(\iota(z)-\iota(y) )^\top u_i=O(t^3+t\epsilon^2)  \quad \mbox{for $q-l+1 \leq i \leq q$}.\label{proofoftheorem2 q-l+1toq}
\end{align}

With (\ref{proofoftheorem2 1tod}), (\ref{proofoftheorem2 d+1toq-l}) and (\ref{proofoftheorem2 q-l+1toq}), we can finish the proof for $\alpha\neq d$. 

\underline{\textbf{Case 2: $1 \leq \alpha <d$.}} In this case, we have
\begin{align}
&(\iota(z)-\iota(y) )^\top \big(\mathcal{T}_\alpha[\bar{\mathcal C}_\epsilon(y)]-\mathcal{T}_d[\bar{\mathcal C}_\epsilon(y)]\big)(\iota(z)-\iota(y) )\\
= \,&(\iota(z)-\iota(y) )^\top \Big[\sum_{j=\alpha+1}^d \frac{u_j u_j^\top}{\lambda_j}\Big](\iota(z)-\iota(y) ). \nonumber
\end{align}
Recall that $\lambda_j$ is of order 1 for $1 \leq j \leq d$ and also (\ref{proofoftheorem2 1tod}).
As a result, 
\begin{align}
(\iota(z)-\iota(y) )^\top \mathcal{T}_\alpha[\bar{\mathcal C}_\epsilon(y)](\iota(z)-\iota(y) )=(\iota(z)-\iota(y) )^\top \mathcal{T}_d[\bar{\mathcal C}_\epsilon(y)](\iota(z)-\iota(y) )+O(t^2).\nonumber
\end{align}
Similarly, 
\begin{align}
(\iota(z)-\iota(y) )^\top \mathcal{T}_\alpha[\bar{\mathcal C}_\epsilon(z)](\iota(z)-\iota(y) )=(\iota(z)-\iota(y) )^\top \mathcal{T}_d[\bar{\mathcal C}_\epsilon(z)](\iota(z)-\iota(y) )+O(t^2).\nonumber
\end{align}
We then achieve the claim
\begin{align}
\mathsf{EIG}_{\alpha}^2(y,z)=(d+2)t^2 +O(t^2)\,.
\end{align}

We now show a special case when $1\leq \alpha<d$. Let the geodesic distance between $y$ and $z$ be $d(y,z)$, then by Lemma \ref{Lemma1ofTheorem2}, we have $d(y,z)=\|\iota(y)-\iota(z)\|_{ \mathbb{R}^q}+O(\|\iota(y)-\iota(z)\|_{ \mathbb{R}^q}^3)=O(t)$.
In a special case when $\iota_{*}|_y \vartheta(y) \in V_\alpha(y)$, where $V_\alpha(y)$ is the subspace of $\iota_*T_yN$ generated by the first $\alpha$ eigenvectors of $\bar{\mathcal{C}}_\epsilon(y)$, we have
\begin{align}
&(\iota(z)-\iota(y) )^\top \big(\mathcal{T}_\alpha[\bar{\mathcal C}_\epsilon(y)]-\mathcal{T}_d[\bar{\mathcal C}_\epsilon(y)]\big)(\iota(z)-\iota(y) )\\
= \,&(\iota(z)-\iota(y) )^\top \Big[\sum_{j=\alpha+1}^d \frac{u_j u_j^\top}{\lambda_j}\Big](\iota(z)-\iota(y) )=O(d(y,z)^4)=O(t^4). \nonumber
\end{align}
If furthermore $\iota_{*}|_z \vartheta(z) \in V_\alpha(z)$, where $V_\alpha(z)$ is the subspace of $\iota_*T_zN$ generated by the first $\alpha$ eigenvectors of $\bar{\mathcal{C}}_\epsilon(z)$, we have
\begin{align}
&(\iota(z)-\iota(y) )^\top \big(\mathcal{T}_\alpha[\bar{\mathcal C}_\epsilon(z)]-\mathcal{T}_d[\bar{\mathcal C}_\epsilon(z)]\big)(\iota(z)-\iota(y) )=O(t^4)
\end{align}
and hence
\begin{align}
\mathsf{EIG}_{\alpha}^2(y,z)=(d+2)t^2 +O(t^2\epsilon^2)+O(t^4)\,.
\end{align}

\underline{\textbf{Case 3: $\alpha >d$.}}  In this case, we have $0\leq l_y\leq q-d$. 
By a similar calculation, we have
\begin{align}
&(\iota(z)-\iota(y) )^\top \big(\mathcal{T}_\alpha[\bar{\mathcal C}_\epsilon(y)]-\mathcal{T}_d[\bar{\mathcal C}_\epsilon(y)]\big)(\iota(z)-\iota(y) )\nonumber\\
=&\, (\iota(z)-\iota(y) )^\top \Big[\sum_{j=d+1}^{q-l_y} \frac{u_j u_j^\top}{\lambda_j}+\sum_{j=q-l_y+1}^{\alpha} \frac{u_j u_j^\top}{\lambda_j}\Big](\iota(z)-\iota(y) ) \,.\nonumber 
\end{align}
Since $\lambda_j$ is of order $\epsilon^2$ for $d+1 \leq j \leq q-l_y$, by (\ref{proofoftheorem2 d+1toq-l})
\begin{align}
 (\iota(z)-\iota(y) )^\top \sum_{j=d+1}^{q-l_y} \frac{u_j u_j^\top}{\lambda_j}(\iota(z)-\iota(y) )=O((t^2/\epsilon+t\epsilon)^2). \nonumber
\end{align}
When $q-l_y+1 \leq j \leq \alpha$, $\lambda_j$ is of order $\epsilon^4$ by assumption. By (\ref{proofoftheorem2 q-l+1toq})
\begin{align}
 (\iota(z)-\iota(y) )^\top \sum_{j=q-l_y+1}^{\alpha} \frac{u_j u_j^\top}{\lambda_j}(\iota(z)-\iota(y) )=O((t^3/\epsilon^2+t)^2). \nonumber
\end{align}
Hence, {when $\alpha\geq q-l_y+1$,}
\begin{align}
(\iota(z)-\iota(y) )^\top \big(\mathcal{T}_\alpha[\bar{\mathcal C}_\epsilon(y)]-\mathcal{T}_d[\bar{\mathcal C}_\epsilon(y)]\big)(\iota(z)-\iota(y) )=O((t^2/\epsilon+t\epsilon)^2+(t^3/\epsilon^2+t)^2); \nonumber
\end{align}
{otherwise we have
\begin{align}
(\iota(z)-\iota(y) )^\top \big(\mathcal{T}_\alpha[\bar{\mathcal C}_\epsilon(y)]-\mathcal{T}_d[\bar{\mathcal C}_\epsilon(y)]\big)(\iota(z)-\iota(y) )=O((t^2/\epsilon+t\epsilon)^2)\,.\nonumber
\end{align}}
By the same argument, {when $\alpha\geq q-l_z+1$, we have}
\begin{align}
(\iota(z)-\iota(y) )^\top \big(\mathcal{T}_\alpha[\bar{\mathcal C}_\epsilon(z)]-\mathcal{T}_d[\bar{\mathcal C}_\epsilon(z)]\big)(\iota(z)-\iota(y) )=O((t^2/\epsilon+t\epsilon)^2+(t^3/\epsilon^2+t)^2); \nonumber
\end{align}
{otherwise we have
\begin{align}
(\iota(z)-\iota(y) )^\top \big(\mathcal{T}_\alpha[\bar{\mathcal C}_\epsilon(z)]-\mathcal{T}_d[\bar{\mathcal C}_\epsilon(z)]\big)(\iota(z)-\iota(y) )=O((t^2/\epsilon+t\epsilon)^2)\,.\nonumber
\end{align}}
Summing above two equations together {leads the fact that when $l_y>0$ and $\alpha\geq q-l_y+1$ hold, or $l_z>0$ and $\alpha\geq q-l_z+1$ hold, we have}
\begin{align}
\mathsf{EIG}_{\alpha}^2(y,z)=(d+2)t^2+O((t^2/\epsilon+t\epsilon)^2+(t^3/\epsilon^2+t)^2)\,;
\end{align}
{otherwise we have
\begin{align}
\mathsf{EIG}_{\alpha}^2(y,z)=(d+2)t^2+O((t^2/\epsilon+t\epsilon)^2)\,.\nonumber
\end{align}}
Set $t=\epsilon^\beta$, where $\beta\geq 1$. 
By a straightforward calculation, {when $l_y=0$ or $\alpha\leq q-l_y$ holds, and $l_z=0$ or $\alpha\leq q-l_z$ holds}, $t^2$ dominates $O((t^2/\epsilon+t\epsilon)^2$ asymptotically when $\beta>1$; otherwise $t^2$ and $O((t^2/\epsilon+t\epsilon)^2+(t^3/\epsilon^2+t)^2)$ are asymptotically of the same order. 
We thus conclude the claim. 
\end{proof}

\begin{remark}
We would like to make a comment {when $l_y>0$ and $\alpha> q-l_y$ hold, or $l_z>0$ and $\alpha> q-l_z$ hold}. To simplify the discussion, assume $l_y=l_z=l>0$ and $\alpha> q-l$.
For $u\in\mathbb{R}^q$, we write $u=[u_1^\top, \,u_2^\top, \,u_3^\top]^\top$, where $u_1 \in \mathbb{R}^d$. $u_2 \in \mathbb{R}^{q-d-l}$ and  $u_3 \in \mathbb{R}^{l}$. 
As we stated in the proof of equation (\ref{proofoftheorem2 d+1toq-l}), $u_j=[O(\epsilon^2),\,O(1), \,O(\epsilon^2)]^\top$ for $j=d+1, \cdots, q-l$. On the other hand, $\iota(z)-\iota(y)=[O(t),O(t^2),O(t^3)]^\top$. Here, based on the structure of $\bar{\mathcal C}_\epsilon(y)$, the first $d$ components of $\iota(z)-\iota(y)$ are in the tangent direction of $\iota(N)$ at $\iota(y)$, and they are of order $O(t)$. 
The next $q-d-l$ components are in the direction of the second fundamental form of $\iota$ at $\iota(y)$, and they are of order $O(t^2)$. 
The last $l$ components are in the normal direction and perpendicular to the second fundamental form of $\iota$ at $\iota(y)$, and they are of order $O(t^3)$. 
When we calculate the product $(\iota(z)-\iota(y) )^\top u_j$, the products of components in the tangent directions and the products of components in the normal directions cannot be canceled in general. 
The argument holds for equation (\ref{proofoftheorem2 q-l+1toq}). Hence, the order estimation for Case $\alpha >d$ cannot be improved for an arbitrary vector $\iota(z)-\iota(y)$ without imposing more conditions. In other words, an expansion of $u_j$ or $\iota(z)-\iota(y)$ into higher orders might not improve the results. 
\end{remark}

\section{Proof of Theorem \ref{Theorem1:geodesic}} \label{Section:Appendix:Thm1:proof}

By Lemma \ref{prop1}, we have
\begin{equation}
U_{\bar{h}}(x)=\begin{bmatrix}U_1 & 0 \\ 0 & U_2\end{bmatrix}+O(\bar{h}^2), \nonumber
\end{equation}
where {$U_1\in \mathbb{O}(d)$ and $U_2\in \mathbb{O}(p-d)$.}
By Equation~\eqref{oldLemma1},
\begin{equation}
\iota(y)-\iota(x)=(\iota_{*}\theta) t+\frac{\Second_{x}(\theta,\theta)}{2} t^2+O(t^3). \nonumber
\end{equation}
By Equation \eqref{oldLemma2a}, 
\begin{equation}
t=h+\frac{\|\Second_{x}(\theta,\theta)\|^2}{24}  h^3 +O(h^4).\nonumber
\end{equation}
Hence
\begin{equation}
\iota(y)-\iota(x)=(\iota_{*}\theta) h+\frac{\Second_{x}(\theta,\theta)}{2} h^2+O(h^3). \nonumber
\end{equation}
Note that $(\iota_{*}\theta) h$ is in $\iota_*T_{x}M$. Therefore 
\begin{align}
P^{\perp}_{h}(\iota(y)-\iota(x))&=\frac{\Second_{x}(\theta,\theta)}{2} h^2+O(h^3+h\bar{h}^2)=\frac{\Second_{x}(\theta,\theta)}{2} h^2+O(h\bar{h}^2), \nonumber
 \end{align}
where we use the fact that $h < \bar{h}$ in the last step. Hence,
\begin{align}
\frac{\|P^{\perp}_{h}(\iota(y)-\iota(x))\|^2_{\mathbb{R}^p}}{6h}&= \frac{\|\Second_{x}(\theta,\theta)\|^2}{24}  h^3+O(h^2\bar{h}^2). \nonumber
\end{align}
The conclusion follows.\qed

\section{Proof of Theorem \ref{Theorem:TruncatedLLE}}\label{Section:Appendix:Theorem3:proof}

The proof  of Theorem \ref{Theorem:TruncatedLLE} consists of two steps, the variance analysis and bias analysis.
The variance analysis of the LDR-LLE is similar to Case 0 in \cite[Theorem 3.1]{wu2017think}, so we only provide the result and the proof is omitted.
\begin{proposition} 
Fix $f \in C(\iota(M))$. Suppose $h=h(n)$ so that $\frac{\sqrt{\log(n)}}{n^{1/2}h^{d/2+1}}\to 0$ and $h\to 0$ as $n\to \infty$. With probability greater than $1-n^{-2}$, for all $x_k\in\mathcal{X}$,
\begin{align}
\sum_{j=1}^N\tilde{w}_{n,x_k}(i)f(x_{k,j})=\int_M P_h^{\texttt{tLLE}}(x_k,y) f(y)P(y)dV(y)+O\Big(\frac{\sqrt{\log (n)}}{n^{1/2}h^{d/2-1}}\Big)\nonumber.
\end{align}
\end{proposition}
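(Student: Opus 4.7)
The plan is to derive the stated bound by a standard empirical-process argument that parallels the variance analysis of \cite[Theorem~3.1, Case~0]{wu2017think}; the only genuinely new ingredient is that the regularized pseudo-inverse $\mathcal{I}_c$ is replaced by the truncated pseudo-inverse $\mathcal{T}_d$. First I rewrite
\[
\sum_{j=1}^{N_k}\tilde w_{n,x_k}(j)f(x_{k,j})=\frac{n^{-1}\sum_{j=1}^{N_k}\bigl[1-(x_{k,j}-x_k)^\top\tilde{\mathbf{T}}_{n,x_k}\bigr]f(x_{k,j})}{n^{-1}\sum_{j=1}^{N_k}\bigl[1-(x_{k,j}-x_k)^\top\tilde{\mathbf{T}}_{n,x_k}\bigr]},
\]
and observe that the right-hand side of the proposition is the same ratio with empirical averages replaced by integrals against $P\,dV$ and $\tilde{\mathbf{T}}_{n,x_k}$ replaced by $\tilde{\mathbf{T}}_{\iota(x_k)}$. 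Hence it suffices to control, uniformly in $x_k\in\mathcal{X}$, the empirical deviations of (i) scalar averages $n^{-1}\sum_j g(x_{k,j})\chi_{B_h^{\mathbb{R}^p}(x_k)}(x_{k,j})$ for $g\in\{1,f\}$, (ii) the vector $n^{-1}G_{n,h}(x_k)\boldsymbol{1}_{N_k}$, (iii) the sample covariance $n^{-1}C_{n,h}(x_k)$ in operator norm, and (iv) the induced perturbation $\tilde{\mathbf{T}}_{n,x_k}-\tilde{\mathbf{T}}_{\iota(x_k)}$.

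For (i)--(iii), each quantity is an i.i.d.\ sum of bounded summands supported on a ball of radius $h$ whose variances are of order $h^d$, $h^{d+2}$, and $h^{d+4}$ respectively; Bernstein's inequality therefore gives pointwise deviations of order $\sqrt{\log n/n}\,h^{d/2}$, $\sqrt{\log n/n}\,h^{d/2+1}$, and $\sqrt{\log n/n}\,h^{d/2+2}$, each with failure probability at most $n^{-3}$, and a union bound over the $n$ centres upgrades this to the claimed simultaneous probability $1-n^{-2}$. The main obstacle, and the only ingredient genuinely distinct from the regularized analysis, is (iv), because $\mathcal{T}_d$ is discontinuous at rank changes and hence not globally Lipschitz. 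Lemma~\ref{prop1} supplies the crucial input: the top $d$ eigenvalues of $C_h(x_k)$ are of order $h^{d+2}$ while the remaining nontrivial eigenvalues are $O(h^{d+4})$, producing a spectral gap of order $h^{d+2}$. The bandwidth hypothesis $\sqrt{\log n}/(n^{1/2}h^{d/2+1})\to 0$ is precisely what forces the operator-norm error in (iii) to be asymptotically smaller than this gap, so Weyl's inequality keeps the top-$d$ block of eigenvalues of $C_{n,h}(x_k)$ separated from the rest on the same high-probability event, and Davis--Kahan bounds the perturbation of the top-$d$ eigenprojector. Combining the eigenvalue and eigenprojector estimates yields a quantitative bound on $\bigl\|\mathcal{T}_d[C_{n,h}(x_k)]-\mathcal{T}_d[nC_h(x_k)]\bigr\|_{\mathrm{op}}$, and multiplying this by $G_{n,h}(x_k)\boldsymbol{1}_{N_k}$ (whose continuous counterpart is controlled by the second fundamental form, Lemma~\ref{vector T}) produces the required estimate for (iv).

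With (i)--(iv) in hand I substitute into the ratio above. The denominator stays bounded below by a constant multiple of its leading $O(h^d)$ term on the same good event (Lemma~\ref{vector T}), so a first-order Taylor expansion of $A/B$ is valid and the error of the ratio is dominated by the sum of the normalised errors of the numerator and denominator. Collecting the contributions from (i)--(iv) produces the claimed uniform rate $O(\sqrt{\log n}/(n^{1/2}h^{d/2-1}))$; the bookkeeping here is line-for-line the same as in \cite[Theorem~3.1, Case~0]{wu2017think}, since the ratio structure is identical to that of ordinary LLE once $\mathcal{I}_c$ has been swapped for $\mathcal{T}_d$. Apart from the Davis--Kahan step singled out above, which is the real crux, there is no additional novelty, which is why the paper states the proposition with the remark that the argument parallels \cite[Theorem~3.1]{wu2017think} and omits the detailed repetition.
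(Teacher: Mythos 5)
Your overall plan is the right one and matches what the paper implicitly intends by deferring to \cite[Theorem~3.1, Case~0]{wu2017think}: Bernstein plus a union bound over the $n$ centres for the empirical moments, together with Weyl and Davis--Kahan to control the passage from the regularized inverse $\mathcal{I}_c$ to the truncated inverse $\mathcal{T}_d$. The Davis--Kahan step is indeed the only genuinely new ingredient, and your observation that the bandwidth hypothesis makes the operator-norm error in the sample covariance subordinate to the $O(h^{d+2})$ spectral gap of Lemma~\ref{prop1} is correct and is the crux of the stability of $\mathcal{T}_d$. The paper omits the proof entirely, so you are reconstructing it rather than reproducing it.

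However, the bookkeeping as you state it does not deliver the claimed rate. You correctly place the deviations of the scalar averages $n^{-1}\sum_j g(x_{k,j})\chi(\cdot)$ for $g\in\{1,f\}$ at $O\bigl(\sqrt{\log n/n}\,h^{d/2}\bigr)$, but then assert that the ratio error ``is dominated by the sum of the normalised errors of the numerator and denominator.'' Writing the ratio as $A_n/B_n$ with continuous counterpart $A/B$, where $A,B\asymp h^d$ and $|A_n-A|,|B_n-B|\asymp\sqrt{\log n/n}\,h^{d/2}$, that heuristic only gives
\begin{equation*}
\Bigl|\frac{A_n}{B_n}-\frac{A}{B}\Bigr|\;\lesssim\;\frac{|A_n-A|}{B}+\frac{A\,|B_n-B|}{B^2}\;\asymp\;\frac{\sqrt{\log n}}{n^{1/2}h^{d/2}}\,,
\end{equation*}
a full factor of $1/h$ worse than the stated $O\bigl(\sqrt{\log n}/(n^{1/2}h^{d/2-1})\bigr)$. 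This loss is fatal for the use made of the proposition: dividing by $h^2$ as in Theorem~\ref{Theorem:TruncatedLLE} gives $\sqrt{\log n}/(n^{1/2}h^{d/2+2})$, which need not vanish under the hypothesis $\sqrt{\log n}/(n^{1/2}h^{d/2+1})\to 0$.

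What is missing is the cancellation between numerator and denominator fluctuations, which is also present in the regularized case and is not optional. Expanding
\begin{equation*}
\frac{A_n}{B_n}-\frac{A}{B}=\frac{(A_n-A)B-A(B_n-B)}{B_nB}
\end{equation*}
and using $A=f(x_k)B+O(h^{d+2})$, the numerator equals $B\bigl[(A_n-A)-f(x_k)(B_n-B)\bigr]+O(h^{d+2})(B_n-B)$. The combination $(A_n-A)-f(x_k)(B_n-B)$ is the fluctuation of
\begin{equation*}
\frac{1}{n}\sum_{j}\bigl[1-\tilde{\mathbf{T}}_{n,x_k}^{\top}(x_{k,j}-x_k)\bigr]\bigl(f(x_{k,j})-f(x_k)\bigr),
\end{equation*}
whose summands are $O(h)$ rather than $O(1)$, so Bernstein yields $O\bigl(\sqrt{\log n/n}\,h^{d/2+1}\bigr)$; dividing by $B_nB\asymp h^{2d}$ produces the stated $\sqrt{\log n}/(n^{1/2}h^{d/2-1})$. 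In short, you must recentre $f$ at $x_k$ from the outset, using $\sum_j\tilde w_{n,x_k}(j)=1$, exactly as in the bias analysis. Once this recentring is in place, your Davis--Kahan estimate on $\tilde{\mathbf{T}}_{n,x_k}-\tilde{\mathbf{T}}_{\iota(x_k)}$ only enters multiplied by moments of size $O(h^{d+2})$ and is not the bottleneck, so the rest of the sketch goes through.
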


The bias analysis part of Theorem \ref{Theorem:TruncatedLLE} depends on the following two technical lemmas. We use the following notations to simplify the proof. For $p,d\in \mathbb{N}$ so that $d \leq p$, denote $J_{p,d}\in \mathbb{R}^{p\times d}$ so that the $(i,i)$ entry is $1$ for $i=1,\ldots,d$, and zeros elsewhere.  For $v\in\mathbb{R}^p$, 
\begin{align} \label{vectornotation}
v=[\![v_1,\,v_2]\!]\in \mathbb{R}^p\,,
\end{align}
where $v_1\in \mathbb{R}^{d}$ forms the first $d$ coordinates of $v$ and $v_2\in\mathbb{R}^{p-d}$ forms the last $p-d$ coordinates of $v$. 
Thus, by a proper translation and rotation of $\iota(M)$ in $\mathbb{R}^p$ so that $\iota(x)=0$ and $\iota_*T_x$ occupies the first $d$ axes of $\mathbb{R}^p$, for $v=[\![v_1,\,v_2]\!]\in T_{\iota(x)}\mathbb{R}^p$, $v_1=J_{p,d}^\top v$ is tangential to $\iota_*T_xM$ and $[\![0,\,v_2]\!]$ is normal to $\iota_*T_xM$. 

\begin{lemma}(\cite[Lemma~B.5]{wu2017think})\label{prep lemma}
Fix $x \in  M$ and $f\in C^3(M)$. When $h>0$ is sufficiently small, the following expansions hold.
\begin{enumerate} 
\item The scalar $\mathbb{E}[\chi_{B_{h}^{\mathbb{R}^p}(\iota(x))}(X)]$ satisfies
\begin{equation} 
\mathbb{E}[\chi_{B_{h}^{\mathbb{R}^p}(\iota(x))}(X)]= P(x)\frac{|S^{d-1}|}{d}h^d+O(h^{d+2}).\nonumber
\end{equation} 
\item The scalar $\mathbb{E}[(f(X)-f(x))\chi_{B_{h}^{\mathbb{R}^p}(\iota(x))}(X)] $ satisfies 
\begin{align}
& \mathbb{E}[(f(X)-f(x))\chi_{B_{h}^{\mathbb{R}^p}(\iota(x))}(X)] \nonumber\\
=&\, \frac{|S^{d-1}|}{d(d+2)}\Big[\frac{1}{2}P(x)\Delta f(x)+\nabla f(x)\cdot \nabla P(x)\Big]h^{d+2}+O(h^{d+3}).\nonumber
\end{align} 
\item The vector $\mathbb{E}[(X-\iota(x))\chi_{B_{h}^{\mathbb{R}^p}(\iota(x))}(X)]$ satisfies
\begin{align}
&\mathbb{E}[(X-\iota(x))\chi_{B_{h}^{\mathbb{R}^p}(\iota(x))}(X)]\nonumber\\
=&\,\Big[\!\!\Big[\frac{|S^{d-1}|}{d(d+2)} J_{p,d}^{\top} \iota_*\nabla {P}(x)h^{d+2}+O(h^{d+3}),O(h^{d+2})\Big]\!\!\Big]\,. \nonumber
\end{align}
\item The vector $\mathbb{E}[(X-\iota(x))(f(X)-f(x))\chi_{B_{h}^{\mathbb{R}^p}(\iota(x))}(X)]$ satisfies
\begin{align}
&\mathbb{E}[(X-\iota(x))(f(X)-f(x))\chi_{B_{h}^{\mathbb{R}^p}(\iota(x))}(X)]\nonumber\\
=&\,\Big[\!\!\Big[\frac{|S^{d-1}|}{d(d+2)} J_{p,d}^{\top} P(x)\iota_*\nabla {f}(x) h^{d+2}+O(h^{d+3}),O(h^{d+4})\Big]\!\!\Big]\,.\nonumber
\end{align}
\end{enumerate}
\end{lemma}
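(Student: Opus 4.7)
The plan is to parametrize the domain of integration via the exponential map at $x$ and reduce each moment to an iterated integral in polar coordinates $(t,\theta)\in[0,\infty)\times S^{d-1}$ on $T_xM$, after which the expansion comes from Taylor-expanding the integrand in $t$ and exploiting spherical symmetry. Concretely, I would first convert the Euclidean cutoff $\chi_{B_{h}^{\mathbb{R}^p}(\iota(x))}$ into a geodesic cutoff using Lemma~\ref{Lemma:1}: writing $y=\exp_x(t\theta)$, equation \eqref{oldLemma2b} gives $\|\iota(y)-\iota(x)\|_{\mathbb{R}^p}\le h$ if and only if $t\le h+\tfrac{\|\Second_x(\theta,\theta)\|^2}{24}h^3+O(h^4)$. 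Combined with the standard expansion of the Riemannian volume element in normal coordinates, $dV=(1+O(t^2))t^{d-1}\,dt\,d\theta$, each expectation becomes $\int_{S^{d-1}}\!\int_0^{h+O(h^3)}F(t,\theta)(1+O(t^2))t^{d-1}\,dt\,d\theta$ for an appropriate integrand $F$ obtained from the Taylor expansions
\begin{align*}
P(\exp_x(t\theta))&=P(x)+t\nabla_\theta P(x)+\tfrac{t^2}{2}\nabla^2_{\theta\theta}P(x)+O(t^3),\\
f(\exp_x(t\theta))-f(x)&=t\nabla_\theta f(x)+\tfrac{t^2}{2}\nabla^2_{\theta\theta}f(x)+O(t^3),\\
\iota(\exp_x(t\theta))-\iota(x)&=(\iota_*\theta)t+\tfrac12\Second_x(\theta,\theta)t^2+O(t^3),
\end{align*}
the last being \eqref{oldLemma1}.

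Next I would integrate in $t$ first, so each $t^k$ in the integrand produces a factor $h^{d+k}/(d+k)$ up to $O(h^{d+k+2})$ corrections coming from both the upper-limit perturbation and the $(1+O(t^2))$ volume factor. The angular integration is then handled by the standard identities $\int_{S^{d-1}}\theta_i\,d\theta=\int_{S^{d-1}}\theta_i\theta_j\theta_k\,d\theta=0$, $\int_{S^{d-1}}\theta_i\theta_j\,d\theta=\tfrac{|S^{d-1}|}{d}\delta_{ij}$, and the four-index Wick formula, together with the fact that in normal coordinates at $x$ one has $\sum_i\partial_{ii}f(x)=\Delta f(x)$.

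Applying this recipe part by part: for (1) the leading constant integrand $P(x)$ yields $P(x)|S^{d-1}|h^d/d$, the $t\nabla_\theta P$ correction vanishes by odd-parity, and the next even correction contributes $O(h^{d+2})$. For (2) the $t\nabla_\theta f(x)\,P(x)$ term vanishes by odd-parity; surviving $t^2$ contributions come from $\tfrac{P(x)}{2}\nabla^2_{\theta\theta}f(x)$ and the cross term $\nabla_\theta f\,\nabla_\theta P$, giving exactly $\tfrac{|S^{d-1}|}{d(d+2)}\big[\tfrac12 P(x)\Delta f(x)+\nabla f(x)\cdot\nabla P(x)\big]h^{d+2}$. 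For (3) the tangential components receive a leading $t(\iota_*\theta)_iP(x)$ that vanishes by odd-parity, so the first nonzero piece is the cross term $t\theta_i\cdot t\nabla_\theta P$, yielding $\tfrac{|S^{d-1}|}{d(d+2)}J_{p,d}^{\top}\iota_*\nabla P(x)h^{d+2}$; the normal components start at order $h^{d+2}$ directly from $\tfrac12\Second_x(\theta,\theta)t^2 P(x)$. For (4), pairing the tangential factor $t\theta_i$ with $t\nabla_\theta f$ gives the claimed $\tfrac{|S^{d-1}|}{d(d+2)}J_{p,d}^{\top}P(x)\iota_*\nabla f(x)h^{d+2}$, while the normal factor $\tfrac12\Second_x(\theta,\theta)t^2$ paired with $t\nabla_\theta f$ is odd in $\theta$ and therefore vanishes, pushing the normal contribution down to order $h^{d+4}$ where the even pairing with $\tfrac{t^2}{2}\nabla^2_{\theta\theta}f$ becomes the leading term.

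The main technical obstacle is the careful bookkeeping of the error terms: one must check that the $O(h^3)$ perturbation of the upper limit, the $O(t^2)$ correction in the volume form, and the $O(t^3)$ and $O(t^4)$ remainders in the Taylor expansions each contribute only at the order claimed in the stated remainder, so that the parity-cancellation mechanism (which is what promotes several remainders from $O(h^{d+k+1})$ to $O(h^{d+k+2})$) is not accidentally lost. Secondarily, in parts (3) and (4) one must be careful in the bookkeeping of tangential versus normal components under the splitting $v=[\![v_1,v_2]\!]$, since the second fundamental form only enters the normal block and the rank of cancellation differs between the two blocks. Once the parity structure is cleanly laid out, the rest reduces to elementary angular integrals.
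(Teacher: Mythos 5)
Your proposal is correct and follows the standard geodesic-polar-coordinates calculation (change of variables, Taylor expansion of $P$, $f$, $\iota$, volume-element correction, upper-limit perturbation via Lemma~\ref{Lemma:1}, and parity on $S^{d-1}$), which is exactly the route that the paper's cited reference \cite[Lemma~B.5]{wu2017think} takes; the present paper does not reproduce the proof but only cites it. Your bookkeeping of the odd-parity cancellations and the tangential/normal split $[\![v_1,v_2]\!]$, in particular the observation that the normal block of part~(4) is promoted to $O(h^{d+4})$ because the leading pairing $\tfrac12\Second_x(\theta,\theta)t^2\cdot t\nabla_\theta f$ is odd in $\theta$, reproduces the claimed remainders accurately.
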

The proof of the above Lemma can be found in \cite[Lemma~B.5]{wu2017think}.
The next lemma describes the vector $\mathbf{T}$ when $h$ is sufficiently small. The main significance is that the vector $\tilde{\mathbf{T}}_{\iota(x)}$ almost recovers $\nabla \log P(x)$ in the tangent direction. 
\begin{lemma}\label{vector T}
Fix $x\in M$. When $h$ is sufficiently small, we have
\[
\tilde{\mathbf{T}}_{\iota(x)}=\Big[\!\!\Big[\frac{J_{p,d}^\top \iota_*\nabla {P}(x)}{P(x)}+O(h^2),\,O(h^2)\Big]\!\!\Big].
\]
\end{lemma}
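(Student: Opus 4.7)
The plan is to evaluate $\tilde{\mathbf{T}}_{\iota(x)} = \mathcal{T}_d(C_h(x))\,v$, where $v := \mathbb{E}[(X-\iota(x))\chi_{B_h^{\mathbb{R}^p}(\iota(x))}(X)]$, by feeding in the three expansions we already have: the spectral structure of $C_h(x)$ from Lemma~\ref{prop1}, and the expansion of $v$ from Lemma~\ref{prep lemma}(3). Writing $C_h(x)=U_h(x)\Lambda_h(x)U_h(x)^\top$, the truncated inverse factors as
\[
\mathcal{T}_d(C_h(x)) = A\,\Lambda_d(x)^{-1} A^\top, \qquad A := U_h(x)J_{p,d},
\]
where $\Lambda_d(x)$ is the top $d\times d$ block of $\Lambda_h(x)$. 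Lemma~\ref{prop1} gives $\Lambda_d(x)^{-1} = \tfrac{d(d+2)}{|S^{d-1}|P(x)h^{d+2}}(I_{d\times d}+O(h^2))$ and $A = \bigl[\!\begin{smallmatrix} U_1\\ 0\end{smallmatrix}\!\bigr] + O(h^2)$, where the $O(h^2)$ rotational error comes from the anti-symmetric matrix $\mathsf{S}$ and therefore mixes tangent and normal directions.

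Next I would apply Lemma~\ref{prep lemma}(3) to expand
\[
v = \Big[\!\!\Big[\tfrac{|S^{d-1}|}{d(d+2)}J_{p,d}^\top \iota_*\nabla P(x)h^{d+2}+O(h^{d+3}),\,O(h^{d+2})\Big]\!\!\Big],
\]
so both the tangent part $v_1$ and the normal part $v_2$ are of order $h^{d+2}$. Then I compute $A^\top v = J_{p,d}^\top U_h(x)^\top v$. Because $U_h(x)^\top$ is block-diagonal up to an $O(h^2)$ anti-symmetric rotation, the first $d$ coordinates of $U_h(x)^\top v$ equal $U_1^\top v_1$ plus an $h^2\mathsf{S}$-contribution of size $O(h^2)\cdot O(h^{d+2}) = O(h^{d+4})$. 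Hence $A^\top v = U_1^\top v_1 + O(h^{d+4})$.

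Multiplying by $\Lambda_d(x)^{-1}$, whose scalar prefactor blows up like $h^{-(d+2)}$, gives
\[
\Lambda_d(x)^{-1} A^\top v \;=\; U_1^\top\,\frac{J_{p,d}^\top \iota_*\nabla P(x)}{P(x)} \;+\; O(h^2),
\]
where the $O(h^2)$ absorbs: the $O(h^2)$ correction inside $\Lambda_d^{-1}$ acting on an $O(1)$ leading vector; the $O(h^{d+4})$ leakage from the rotation (which becomes $O(h^2)$ after the $h^{-(d+2)}$ blow-up); and the next-order tangent correction in $v_1$ after the same blow-up. Finally, left-multiplying by $A$ and using $U_1 U_1^\top = I_{d\times d}$ together with the near-block-diagonal form of $U_h(x)$, I get
\[
\tilde{\mathbf{T}}_{\iota(x)} \;=\; A\,\Lambda_d(x)^{-1}A^\top v \;=\; \Big[\!\!\Big[\tfrac{J_{p,d}^\top \iota_*\nabla P(x)}{P(x)},\,0\Big]\!\!\Big] + O(h^2),
\]
which, read in the bracket notation, is exactly the claim.

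The main obstacle is the careful accounting of how the $O(h^2)$ rotational error in $U_h(x)$ couples the normal block of $v$ (which is $O(h^{d+2})$, not smaller) into the tangent direction before the $h^{-(d+2)}$ blow-up of $\Lambda_d^{-1}$; the point to verify is that this coupling contributes $O(h^2)$ rather than $O(1)$, which is precisely what the two order-$h^2$ factors in $\mathsf{S}$ and in the error-to-leading-order ratio of $v$ buy us.
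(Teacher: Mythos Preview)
Your proposal is correct and follows essentially the same route as the paper: both feed the eigenstructure of $C_h(x)$ from Lemma~\ref{prop1} and the expansion of $v=\mathbb{E}[(X-\iota(x))\chi_{B_h}(X)]$ from Lemma~\ref{prep lemma}(3) into $\mathcal{T}_d(C_h(x))\,v$, track how the $O(h^2)$ rotational error in $U_h(x)$ couples the $O(h^{d+2})$ normal part of $v$ into the tangent block, and finish via $U_1U_1^\top=I_{d\times d}$. The only difference is cosmetic---the paper writes the truncated inverse as the sum $\sum_{i=1}^d \lambda_i^{-1}u_iu_i^\top$ and works eigenvector by eigenvector, whereas you keep the compact block factorization $A\Lambda_d^{-1}A^\top$ throughout.
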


\begin{proof}
Note that 
\begin{align}
\tilde{\mathbf{T}}_{\iota(x)}:=\mathcal{T}_d [C_{h}(x)] \big[\mathbb{E}(X-\iota(x))\chi_{B_{h}^{\mathbb{R}^p}(\iota(x))}\big]= \sum_{i=1}^d\frac{u_i u_i^\top \big[\mathbb{E}(X-\iota(x))\chi_{B_{h}^{\mathbb{R}^p}(\iota(x))}\big]}{\lambda_i}\,,\nonumber
\end{align}
where $u_i$ and $\lambda_i$ form the $i$-th eigenpair of $C_{h}(x)$.
By Lemma \ref{prop1}, $u_i=\begin{bmatrix}U_1J_{p,d}^\top e_i+O(h^2)\\  O(h^2) \end{bmatrix}$, $i=1,\ldots,d$ {and $U_1\in \mathbb {O} (d)$,} and $\lambda_i=\frac{|S^{d-1}|  {P}(x)}{d(d+2)}h^{d+2}+O(h^{d+4})$. By Lemma \ref{prep lemma} , we have for $1 \leq i \leq d$
\begin{align} 
&u_i^\top \mathbb{E}[(X-\iota(x))\chi_{B_{h}^{\mathbb{R}^p}(x_k)}(X)]   \nonumber\\
=&\, \frac{|S^{d-1}|}{d+2}\Big[\!\!\Big[ \frac{J_{p,d}^\top \iota_*\nabla {P}(x)}{d}h^{d+2} +O(h^{d+4}),\,O(h^{d+2})\Big]\!\!\Big] \cdot \Big[\!\!\Big[U_1J_{p,d}^\top e_i+O(h^2), O(h^2)\Big]\!\!\Big] \nonumber \\
=&\, \frac{|S^{d-1}|}{d(d+2)}  (\iota_*\nabla {P}(x))^\top  J_{p,d} U_1J_{p,d}^\top e_i h^{d+2}+O(h^{d+4}) \nonumber\,.
\end{align} 
Thus, for $1 \leq i \leq d$,
\begin{align} 
\frac{u_i^\top \mathbb{E}[(X-\iota(x))\chi_{B_{h}^{\mathbb{R}^p}(x_k)}(X)]  }{\lambda_i} &= \frac{\frac{|S^{d-1}|}{d(d+2)}  (\iota_*\nabla {P}(x))^\top  J_{p,d} U_1J_{p,d}^\top e_i h^{d+2}+O(h^{d+4}) }{\frac{|S^{d-1}|  {P}(x)}{d(d+2)}h^{d+2}+O(h^{d+4})}\nonumber\\
&=\frac{ (\iota_*\nabla {P}(x))^\top  J_{p,d} U_1J_{p,d}^\top e_i }{{P}(x)}+O(h^2)\,.\nonumber
\end{align} 
Hence, we have
\begin{align}
\tilde{\mathbf{T}}_{\iota(x)} &=\sum_{i=1}^d\frac{u_i^\top \mathbb{E}[(X-\iota(x))\chi_{B_{h}^{\mathbb{R}^p}(x_k)}(X)] }{\lambda_i}u_i\nonumber \\
&=\sum_{i=1}^d\Big(\frac{ (\iota_*\nabla {P}(x))^\top  J_{p,d} U_1J_{p,d}^\top e_i }{{P}(x)}+O(h^2)\Big)\Big[\!\!\Big[U_1J_{p,d}^\top e_i+O(h^2),\, O(h^2)\Big]\!\!\Big]\nonumber\\
&=\Big[\!\!\Big[\frac{J_{p,d}^\top \iota_*\nabla {P}(x)}{{P}(x)}+O(h^2),\,O(h^2)\Big]\!\!\Big],\nonumber
\end{align}
where the last step follows from the fact that {$U_1\in \mathbb{O}(d)$.}
\end{proof}

We are ready to prove Theorem \ref{Theorem:TruncatedLLE}.
\begin{proof}[Proof of Theorem \ref{Theorem:TruncatedLLE}]
Note that
\begin{align}
&\int_M P_{\texttt{tLLE}}(x,y) f(y)P(y)dV(y)-f(x)\nonumber\\
=&\,\frac{\mathbb{E}[(f(X)-f(x))\chi_{B_{h}^{\mathbb{R}^p}(\iota(x))}(X)] -\tilde{\mathbf{T}}_{\iota(x)}^\top \mathbb{E}[(X-\iota(x))(f(X)-f(x))\chi_{B_{h}^{\mathbb{R}^p}(\iota(x))}(X)]}{\mathbb{E}[\chi_{B_{h}^{\mathbb{R}^p}(\iota(x))}(X)]-\tilde{\mathbf{T}}_{\iota(x)}^\top \mathbb{E}[(X-\iota(x))\chi_{B_{h}^{\mathbb{R}^p}(\iota(x))}(X)]}\,.\nonumber
\end{align}
By Lemma \ref{prep lemma} and Lemma \ref{vector T}, we have 
\begin{align}
\tilde{\mathbf{T}}_{\iota(x)}^\top \mathbb{E}[(X-\iota(x))\chi_{B_{h}^{\mathbb{R}^p}(\iota(x))}(X)]=O(h^{d+2}),
\end{align}
and
\begin{align}
\tilde{\mathbf{T}}_{\iota(x)}^\top \mathbb{E}[(X-\iota(x))(f(X)-f(x))\chi_{B_{h}^{\mathbb{R}^p}(\iota(x))}(X)]=\frac{|S^{d-1}|}{d(d+2)} \nabla P(x) \cdot \nabla {f}(x) h^{d+2}+O(h^{d+3}).\nonumber
\end{align}
Hence,
\begin{align}
\mathbb{E}[\chi_{B_{h}^{\mathbb{R}^p}(\iota(x))}(X)]-\mathbf{T}_{\iota(x)}^\top \mathbb{E}[(X-\iota(x))\chi_{B_{h}^{\mathbb{R}^p}(\iota(x))}(X)]=P(x)\frac{|S^{d-1}|}{d}h^d+O(h^{d+2}),\nonumber
\end{align}
and
\begin{align}
& \mathbb{E}[(f(X)-f(x))\chi_{B_{h}^{\mathbb{R}^p}(\iota(x))}(X)] -\mathbf{T}_{\iota(x)}^\top \mathbb{E}[(X-\iota(x))(f(X)-f(x))\chi_{B_{h}^{\mathbb{R}^p}(\iota(x))}(X)]\nonumber \\
=& \frac{|S^{d-1}|}{d(d+2)}\Big[\frac{1}{2}P(x)\Delta f(x)+\nabla f(x)\cdot \nabla P(x)\Big]h^{d+2}-\frac{|S^{d-1}|}{d(d+2)} \nabla P(x) \cdot \nabla {f}(x) h^{d+2}+O(h^{d+3}) \nonumber \\
=& \frac{|S^{d-1}|}{2d(d+2)}P(x)\Delta f(x)h^{d+2}+O(h^{d+3}). \nonumber
\end{align}
If we combine the above two equations, we conclude the claim that
\begin{equation}
\int_M P_{\texttt{tLLE}}(x,y) f(y)P(y)dV(y)-f(x)=\frac{1}{2(d+2)}\Delta  f(x) h^2+O(h^3).\nonumber
\end{equation}
\end{proof}

\end{document}